\newtheorem{theorem}{Theorem}[section]
\newtheorem{example}[theorem]{Example}
\newtheorem{lemma}[theorem]{Lemma}
\let\Section=\section
\def\section{\setcounter{equation}{0}\Section}
\newenvironment{proof}[1][Proof]{\textbf{#1.} }{\ \rule{0.5em}{0.5em}}
\begin{document}

\title{A nonlinear
 stochastic heat equation: H\"older continuity and smoothness of the density
of the solution }

\author{Yaozhong Hu\thanks{Y.  Hu is
partially supported by a grant from the Simons Foundation
\#209206.}, \  David Nualart\thanks{ D. Nualart is supported by the
NSF grant DMS0904538. \newline
  Keywords:   fractional noise, stochastic heat equations, Feynman-Kac formula,
  exponential integrability,
absolute continuity, H\"older continuity, chaos expansion.
  }\, and Jian Song   \\
  }
\date{}
\maketitle

\begin{abstract}
In this paper, we establish a version of the Feynman-Kac formula for
 multidimensional stochastic heat equation driven by a general
semimartingale.  This  Feynman-Kac formula is  then  applied to
study some nonlinear stochastic heat equations driven by
nonhomogenous Gaussian noise:   First, it is obtained   an explicit
expression for the Malliavin derivatives of the solutions.   Based
on the representation we obtain the smooth property of the density
of the law of  the solution.  On the other hand,  we also obtain the
H\"older continuity of the solutions.
\end{abstract}

\setcounter{equation}{0}
\section{Introduction}
In this paper we  consider the following   nonlinear
stochastic heat equation:
\begin{equation}
\begin{cases}
\dfrac{\partial u}{\partial t}=\dfrac{1}{2}\triangle u
+b(u)+\sigma(u) \dot{W}(t,x),
\quad t\ge 0,\quad  x\in \mathbb{R}^d&\\
u(0,x)=u_0(x)\,, &
\end{cases}\label{e.1.1}
\end{equation}
where  $\Delta=\sum_{i=1}^d \frac{\partial ^2}{\partial x_i^2}$ is
the Laplace  operator, $b$ and $\sigma$ are globally Lipschitz
continuous functions, and $W$ is a zero mean Gaussian random field,
which is a Brownian motion in the time variable and it has a
nonhomogeneous spatial covariance with density $q(x,y)$ (see
(\ref{e1}) for the  precise  definition). Here $\dot{W}(t,x)$ denotes  the generalized  random field
$\displaystyle{\frac {\partial^{d+1} W}{\partial t \partial x_1 \cdots \partial x_d}}$.

The case of an homogeneous covariance kernel $q(x,y)=q(x-y)$ has
been studied  in  the seminal paper by Dalang \cite{dalang}.  In
this case, the existence, uniqueness and   H\"older continuity of $u(t,x)$ with respect to both
parameters $t$ and $x$ is obtained in \cite{sanz}  under
integrability conditions on the spectral measure $\mu$ of the noise. We extend these results to the nonhomogeneous case
in Section 4.

On the other hand, using the techniques of Malliavin calculus, and
assuming suitable nondegeneracy conditions, one can show that for a
fixed $(t,x)$, $t>0$, the random variable $u(t,x)$, solution to (\ref{e.1.1}),  has an absolutely
continuous probability law and the density is smooth. The results
that have been obtained so far along this direction can be
summarized as follows.

\begin{itemize}
\item[(i)]  In \cite{pardoux} Pardoux and Zhang considered Equation (\ref{e.1.1})
when $x$ is in the interval  $(0,1)$ with Dirichlet boundary
conditions, assuming that $W$ is a space-time white noise.
 In this case, if the coefficients are Lipschitz, then $u(t,x)$ has an absolutely continuous
  distribution for any $t>0$ provided $\sigma(u_0(x_0)) \not=0$ for some $x_0\in (0,1)$.
  The smoothness of the density in this framework was proved by Mueller
  and Nualart in \cite{mullernualart}, assuming that the coefficients are infinitely
  differentiable with bounded derivatives. On the other hand,  under the stronger nondegeneracy
  condition $|\sigma(x)| \ge c>0$, and smooth coefficients, Bally and Pardoux \cite{ballypardoux}
  proved that the law of any vector of the form $(u(t,x_1),
 \dots,
u(t,x_n))$,
$0\le x_1 < \cdots  < x_n \le 1$, $t>0$, has an infinitely
differentiable density, assuming Neumann boundary conditions on
$(0,1)$.
\item[(ii)]   For the $d$-dimensional heat equation with an homogeneous spatial covariance,  Nualart and Quer-Sardanyons have provided sufficient conditions for the  existence and  smoothness of the density  of $u(t,x)$ for $t>0$ and $x\in\mathbb{R}^d$,  assuming   $|\sigma(x)| \ge c>0$, in the paper
\cite{nualartquer} (see also \cite{dkmnx}).
\end{itemize}

An open problem for the stochastic heat equation with colored
spatial covariance is to derive the  existence and smoothness of the
density under a nondegeneracy condition of the form
$\sigma(u_0(x_0)) \not=0$ for some $x_0\in \mathbb{R}^d$. The main
purpose of this paper is obtain   new results in this direction. To
prove such   results we need to show that the norm of the Malliavin
derivative of the solution  $  \int_0^t \| D_s
u(t,x)\|^2_{\mathcal{H}} ds $ is either strictly positive almost
surely (for the absolute continuity) or it has negative moments of
all orders (for the smoothness of the density), where $\mathcal{H}$
is the Hilbert space associated with the spatial covariance.

We develop a new approach to prove these results based on the Feynman-Kac representation for the solution  to the heat equation with multiplicative noise driven by a general continuous semimartingale.  The main idea is to express  $ \| D_s u(t,x)\|_{\mathcal{H}}^2 $ as the norm in $L^2(  \mathbb{R}^d)$ of a function $V_{s,\xi}(t,x)$ given by $V_{s,\xi}(t,x)= \int_{\mathbb{R}^d} c(\xi, y) D_{s,y}u(t,x)  dy$, where $c$ is the square root   of the kernel $q$ as an operator. Then for any fixed $(s,\xi)$,  $V_{s,\xi}(t,x)$ satisfies the  linear stochastic heat equation with random coefficients
\begin{equation}  \label{g2}
\dfrac{\partial V_{s,\xi}}{\partial t}=\dfrac{1}{2}\triangle V_{s,\xi}+b'(u)V_{s,\xi}+\sigma'(u)
V_{s,\xi} \dot {W}(t,x)\,,
\quad  t\ge s, x\in \mathbb{R}^d,
\end{equation}
with initial condition $V_{s,\xi}(s,x)= c(\xi,x)\sigma(u(s,x))$.

In order to establish a Feynman-Kac representation for the solution to Equation (\ref{g2}) we need to assume that the covariance kernel $q(x,y)$ is non-singular and this implies the existence of a random field $W_1(t,x)$ such that  $\dot {W}(t,x)=\dfrac{\partial W_1} {\partial t} (t,x)$. Then, Equation (\ref{g2}) is a  particular case of a more general stochastic  heat equation of the form
\begin{equation}\label{heat}
  \dfrac{\partial V}{\partial t}(t,x) =\dfrac{1}{2} \triangle V(t,x)+ V   \dfrac{\partial F} {\partial t}  (t,x),
\end{equation}
where $\{F(t,x), t\ge 0, x\in \mathbb{R}^d\}$ is a  continuous
semimartingale in the sense of Kunita  \cite{kunita}, with  local
characteristic $b(t,x)= b'(u(t,x))$ and  $a(t,x,y)= \sigma'(u(t,x))
\sigma'(u(t,y)) q(x,y)$. In  Section 3 (see  Theorem  \ref{feynman})
we derive a   Feynman-Kac formula for the solution of (\ref{heat})
assuming that the functions $b$ and $a$ are bounded by $C(1+
|x|^\beta)$, and $C(1+ |x|^\beta+|y|^\beta)$ for some $0\le \beta
<2$. This result has its own interest.   The proof is based on a
generalized  It\^o formula proved in \cite{kunita}.

 There have been other papers on the  Feynman-Kac formula for the stochastic  heat equation.
  We can mention the recent works   \cite{hunuso} and
  \cite{hln} on the stochastic  heat
 equation  driven  by
fractional white noise.   We refer to the references  in these papers for related works.

In Section 4 we show the existence and uniqueness of a solution for
the general stochastic heat equation  (\ref{e.1.1}) with a
nonhomogeneous spatial covariance and we deduce the H\"older
continuity of the solution. This result is an extension of the
 results  proved in  \cite{sanz}. Finally, in Section 5, assuming that
 the covariance kernel is continuous and  under a nondegeneracy condition
 of the form $q(x_0,x_0)>0$ and $\sigma(u_0(x_0))\not =0$ for some $x_0\in \mathbb{R}$,
  we establish the absolute continuity of the law of the solution and the smoothness
  of the density  if the coefficients are
 smooth.
 
 To simplify the presentation we have assumed that the functions $b$ and $\sigma$ depend only on the variable $u$. All the results of this paper could be extended without difficulty to the case of coefficients $b(t,x,u)$ and $\sigma (t,x,u)$ such that they are Lipschitz and with linear growth in $u$, uniformly in $(t,x)\in [0,T] \times \mathbb{R}^d$ for any $T>0$. In this case, the nondegeneracy condition would be $\sigma(0, x_0, u_0(x_0)) \not =0$, for some $x_0 \in \mathbb{R}^d$. 

 The results of this paper can be extended to the stochastic heat equation on an open and bounded set $A\subset \mathbb{R}^d$, with Dirichlet boundary conditions. In this case,  the Feynman-Kac formula involves a $d$-dimensional Brownian motion  starting form a point $x\in A$, and killed when it leaves the set  $A$. On the other hand, the existence and smoothness of the density have been deduced, applying techniques of Malliavin calculus, for stochastic differential equations of the form $Lu= b(u) + \sigma(u) \dot{W}$, where $L$ is a differential operator more general than $\partial_t -\frac 12 \Delta$ (see, for instance, \cite{Ko,Ti} where $L$ is a pseudodifferential operator and \cite{nualartquer} where $L$  is a general parabolic or hyperbolic operator). In all these examples, one assumes   that $\sigma$ is bounded away from the origin. Our approach to handle a nondegeneracy of the form
 $\sigma(0,x_0, u_0(x_0))$ only works  if a Feynman-Kac representation  is available for  the corresponding  stochastic linear equation satisfied by the Malliavin derivative. This  happens, for instance,  for parabolic operators of the form $L=\partial_t-
 \sum_i b_i \partial_{x_i} -\frac 12 \sum_{i,j}a_{i,j} \partial^2_{x_i,x_j}$. The methodology developed in this paper could be extended to these operators,   replacing the  Brownian motion by the diffusion process with generator
$ \sum_i b_i \partial_{x_i} -\frac 12 \sum_{i,j}a_{i,j} \partial^2_{x_i,x_j}$.

\setcounter{equation}{0}

\section{Preliminaries}
\subsection{Malliavin calculus}
Let $(\Omega, \mathcal{F}, P)$ be a complete probability space.   Consider a family of zero mean   Gaussian random variables
$W=\{ W_t(\varphi), \varphi\in C_0^\infty(  \mathbb{R}^{d }), t\ge 0\}$, where
$C_0^\infty(   \mathbb{R}^{d })$ denotes  the space of infinitely differentiable functions
on $  \mathbb{R}^{d }$ with compact support, with covariance
\begin{equation}  \label{e1}
E\left[ W_t(\varphi)W_s(\psi)\right] = ( t\wedge s) \int_{\mathbb{R}^{2d}} \varphi( x)\psi( y) q(x, y)dxdy,
\end{equation}
where $q$ is a   nonnegative definite   and locally integrable function.

 Let  $\mathcal H$ be the Hilbert space defined as the completion of $C_0^\infty(  \mathbb{R}^{d })$ by the  inner product
\[
\langle \varphi, \psi \rangle_\mathcal{H} :=  \int_{\mathbb{R}^{2d}} \varphi( x)\psi( y) q(x, y)dxdy.
\]
 The mapping $ \mathbf{1}_{[0,t]}\varphi \mapsto W_t(\varphi)$  can be extended to a linear isometry between $\mathcal{H}_\infty:=L^2([0,\infty);\mathcal H)$ and the $L^2$ space spanned by $W$. Then $\{W(h),  h\in \mathcal{H}_\infty\}$ is an isonormal Gaussian process associated with the Hilbert space  $\mathcal {H}_\infty$.

We will denote by $D$ the derivative operator in the sense of
Malliavin calculus. That is, if $F$ is a smooth and cylindrical
random variable of the
form%
\begin{equation*}
F=f(W(h _{1}),\ldots ,W(h_{n})),
\end{equation*}%
$h_{i}\in \mathcal{H}_\infty$, $f\in C_{p}^{\infty }(\mathbb{R}^{n})$
($f$ and
all its partial derivatives have polynomial growth), then $DF$ is the $%
\mathcal{H}_\infty$-valued random variable defined by
\begin{equation*}
DF=\sum_{j=1}^{n}\frac{\partial f}{\partial x_{j}}(W(h
_{1}),\ldots ,W(h_{n}))h _{j}.
\end{equation*}%
The operator $D$ is closable from  $L^{2}(\Omega )$ into $L^{2}(\Omega ;%
\mathcal{H}_\infty)$ and we define the Sobolev space $\mathbb{D}^{1,2}$
as the closure of the space of smooth and cylindrical random
variables under the
norm%
\begin{equation*}
\left\| DF\right\| _{1,2}=\sqrt{E(F^{2})+E(\left\| DF\right\| _{\mathcal{H}_\infty%
}^{2})}.
\end{equation*}%
We denote by $\delta $ the adjoint of the derivative operator,
given by duality formula
\begin{equation}
E(\delta (u)F)=E\left( \left\langle DF,u\right\rangle
_{\mathcal{H}_\infty}\right) , \label{dua}
\end{equation}%
for any $F\in \mathbb{D}^{1,2}$ and any element $u\in $ $L^{2}(\Omega ;%
\mathcal{H}_\infty)$ in the domain of $\delta $.  The operator $\delta $
is also called the Skorohod integral.  The higher Malliavin
derivatives can be defined in similar way  and we can define
$  \mathbb{D}^{k,p}$ for any integer $k\ge1$ and real number $p\ge1$. Set $\mathbb D^\infty=
\displaystyle \bigcap_{k\ge 1\,, p\ge 2}  \mathbb{D}^{k,p}$.
To obtain the existence and smoothness of the density,  we  make use of the following criteria.

\begin{theorem}  Let $F:\Omega\rightarrow \mathbb{R}$ be a random variable.
If $\displaystyle F\in \mathbb{D}^{1,2}$ and  $\|DF\|_{\mathcal{H}_\infty}>0$ almost surely, then the
probability law of $F$ is absolutely continuous with respect to the Lebesgue measure. Moreover, if $\displaystyle F\in \mathbb{D}^\infty$ and $E\left[ \|DF\|_{\mathcal{H}_\infty}^{-p}\right]<\infty$ for all $p\ge 1$,  then the   density of $F$
is infinitely differentiable.
\end{theorem}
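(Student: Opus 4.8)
The plan is to recognize these two assertions as the classical criterion of Bouleau--Hirsch and the criterion of Malliavin for the regularity of laws, and to argue along the following lines. For the absolute continuity, since $\|DF\|_{\mathcal H_\infty}>0$ almost surely it is enough to prove that $\mathbf 1_{\{F\in B\}}\,\|DF\|_{\mathcal H_\infty}=0$ a.s. for every Borel set $B\subset\mathbb R$ of zero Lebesgue measure. I would choose a decreasing sequence of open sets $U_n\supset B$ with $|U_n|\le 2^{-n}$ and introduce the $1$-Lipschitz primitives $\psi_n(x)=\int_{-\infty}^{x}\mathbf 1_{U_n}(y)\,dy$, which satisfy $0\le\psi_n\le 2^{-n}$. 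Mollifying each $\psi_n$ and combining the chain rule $D\varphi(F)=\varphi'(F)DF$ (valid for $\varphi\in C_b^1$) with the closedness of the operator $D$ and weak compactness in $L^2(\Omega;\mathcal H_\infty)$, one obtains $\psi_n(F)\in\mathbb D^{1,2}$ with $D[\psi_n(F)]=G_n DF$ for suitable $0\le G_n\le 1$. The delicate part — the actual content of Bouleau--Hirsch — is then the passage to the limit: $\psi_n(F)\to 0$ in $L^2(\Omega)$ forces $G_n DF\to 0$, while the $G_n$ remain bounded below on $\{F\in B\}$, and these two facts are incompatible with $\mathbf 1_B(F)\,\|DF\|_{\mathcal H_\infty}>0$ on a set of positive probability. (If $F$ has extra Malliavin regularity, as is the case in our applications, one can instead localize on $\{\|DF\|_{\mathcal H_\infty}^2>c\}$ and invoke the integration-by-parts identity of the next paragraph to show directly that the law of $F$ is absolutely continuous with a bounded density.)

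For the smoothness, suppose $F\in\mathbb D^\infty$ and $E[\|DF\|_{\mathcal H_\infty}^{-p}]<\infty$ for every $p\ge 1$. The target is an integration-by-parts identity
\begin{equation*}
E\big[g^{(k)}(F)\big]=E\big[g(F)\,H_k\big],\qquad g\in C_b^\infty(\mathbb R),
\end{equation*}
holding for every $k\ge1$ with $H_k$ independent of $g$ and possessing moments of all orders. For $k=1$, put $u:=DF/\|DF\|_{\mathcal H_\infty}^2$; since $\langle Dg(F),u\rangle_{\mathcal H_\infty}=g'(F)$, the duality formula (\ref{dua}) gives $E[g'(F)]=E[g(F)\,\delta(u)]$, hence $H_1=\delta(u)$, and then inductively $H_{k+1}=\delta(H_k u)$. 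The substantive step is to verify that $u\in\mathbb D^{k,p}(\mathcal H_\infty)$ for all $k,p$, equivalently that $\|DF\|_{\mathcal H_\infty}^{-2}\in\mathbb D^\infty$: differentiating $(\|DF\|_{\mathcal H_\infty}^2)^{-1}$ via the Malliavin product and chain rules produces higher negative powers of $\|DF\|_{\mathcal H_\infty}$ multiplied by higher derivatives of $F$, all of which lie in every $L^p$ precisely because $F\in\mathbb D^\infty$ and because of the assumed negative moments. This makes the Skorohod integrals $H_k$ well defined and shows $H_k\in\bigcap_{p\ge1}L^p(\Omega)$. Finally I would feed $g(x)=e^{i\xi x}$ into the identity to obtain $|\xi|^k\,|E[e^{i\xi F}]|\le E|H_k|$ for all $\xi\in\mathbb R$ and all $k\ge1$; thus the characteristic function of $F$ is rapidly decreasing, so by Fourier inversion $F$ admits a density $p_F$, and since $\xi\mapsto|\xi|^k E[e^{i\xi F}]$ is integrable for every $k$ one may differentiate under the integral sign arbitrarily many times, giving $p_F\in C^\infty(\mathbb R)$.

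The step I expect to be the main obstacle is, in the first assertion, the limiting argument that avoids presupposing absolute continuity, and, in the second, establishing that $u=DF/\|DF\|_{\mathcal H_\infty}^2$ belongs to $\mathbb D^\infty(\mathcal H_\infty)$ and that every iterated Skorohod integral $H_k$ has finite moments of all orders; this last point is exactly where the hypothesis $E[\|DF\|_{\mathcal H_\infty}^{-p}]<\infty$ for all $p\ge1$ is indispensable.
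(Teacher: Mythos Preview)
Your sketch is correct and follows exactly the classical route---the Bouleau--Hirsch approximation argument for absolute continuity and Malliavin's iterated integration-by-parts for smoothness---which is precisely the content of the reference the paper invokes. Note that the paper does not give its own proof of this theorem: it simply cites \cite{nualart} (Nualart's monograph) for the result, so there is no ``paper's own proof'' to compare against beyond observing that your outline is the standard one found there.
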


For the proof of this result   and  a detailed presentation   of the Malliavin calculus we refer to
\cite{nualart} and the references therein.

\subsection{Generalized It\^o formula}
In this section we introduce some preliminaries on continuous
semimartingales depending on a parameter and the corresponding
generalized It\^o formula. We refer to \cite{kunita} for more
details.

Fix a time interval $[0,T]$,  a complete probability space $(\Omega,
\mathcal F, P)$ and a filtration $\{\mathcal F_t, 0\le t\le T\}$
satisfying the usual conditions (increasing, right-continuous, and
$\mathcal{F}_0$ contains  all the null sets). Let $\{F(t,x), 0\le
t\le T, x\in O\}$ be a family of real valued processes with
parameter $x\in O$,  where $O$ is a domain in $\mathbb R^d$. We can
regard it as random field with double parameters $x$ and $t$. If
$F(t,x )$ is $m$-times continuously differentiable with respect to
$x$ a.s. for any $t$, it can be regarded as stochastic process with
values in $C^m$ or a $C^m$-process. Here we denote by
$C^m=C^m(O,\mathbb{R})$ the set of all  real valued functions on $O$
which are $m$  times continuously differentiable. If furthermore,
for each multi-index $\alpha\in \{1,\dots, d \}^k$ with
$|\alpha|=k\le m$, $\{D^\alpha_x F(t,x),x\in O\}$ is a family of
continuous semimartingales, then $F(t,x)$ is called a
$C^m$-semimartingale.  Here we  have used  the notation $D^\alpha_x=
\frac {\partial^{|\alpha|}}{ \partial x_{\alpha_1} \cdots  \partial
x_{\alpha_k}}$.

We denote by  $C^{1,1}$    the set of all functions on
$a:[0,T]\times O\times O\rightarrow \mathbb{R}$  such that the
partial derivatives $\frac {\partial a}{\partial x_i}(t,x,y)$,
$\frac{\partial a}{\partial y_j}  (t,x,y)$ and $ \frac{\partial^2
a}{\partial x_i \partial y_j}  (t,x,y)$ exist for any $1\le i,j  \le
d$ and are continuous  in $(x,y)$, and  for any compact set
$K\subset O$     and  $1\le i,j  \le d$
 \[
 \int_0^T  \sup_{x,y\in K}  \left( |a(t,x,y)| +
  |\frac{\partial a}{\partial x_i}(t,x,y)|+ |\frac{\partial a}{\partial y_j}
   (t,x,y)|+ |\frac{\partial^2 a}{\partial x_i \partial y_j} (t,x,y)| \right) dt
 <\infty.
 \]
We also denote  $C^{1}$   the set of all functions on $b:[0,T]\times
O \rightarrow \mathbb{R}$   which are continuously differentiable in
$x$, and  for any compact set $K\subset O$    and any $1\le i  \le
d$
  \[
 \int_0^T  \sup_{x \in K}  \left( |b(t,x)| + |\frac{\partial b}{\partial x_i} (t,x) |\right) dt<\infty.
 \]

Let $ \{F(t,x),x\in O\}$  be a family of continuous semimartingales decomposed as $F(t,x)=M(t,x)+B(t,x)$, where $M(t,x)$ is a continuous local martingale and $B(t,x)$ is a continuous process of bounded variation. Let $A(t,x,y)$ be the joint quadratic variation of $M(t,x)$ and $M(t,y)$ and assume that $A(t,x,y)=\int_0^t a(s,x,y)ds$ and $B(t,x)=\int_0^t b(s,x)ds$, where $a(t,x,y)$ and $b(t,x)$ are predictable processes. Then $(a(t,x,y), b(t,x))$ is called the local characteristic of the family of semimartingales $\{F(t,x), x\in O\}$. Following Section 3.2 of  \cite{kunita},  we say that
the local characteristic $(a,b)$ belongs to the class $B^{1,0}$ if $a(t,x,y)$ and $b(t,x)$ are predictable processes
 with values in $C^{1,1}$ and $C^1$, respectively.

Now let $\{F(t,x),x\in O\}$ be a continuous semimartingale with local characteristic $(a,b)$.  Let $\{f_t, 0\le t\le T\}$ be a predictable process with values in $O$ satisfying
\begin{equation} \label{g1}
\int_0^Ta(s,f_s,f_s)ds<\infty,\quad \int_0^T|b(s,f_s)|ds<\infty\quad a.s.
\end{equation}
Then, the It\^o stochastic integral of $f_t$ based on the kernel $F(dt,x)$ is defined as the following limit in probability if it exists
\[\int_0^t F(ds,f_s)=\lim_{|\Delta|\to 0} \sum_{k=0}^{n-1}\{F(t_{k+1}\wedge t, f_{t_{k}\wedge t})-F(t_k\wedge t, f_{t_k\wedge t})\},\]
  where $\Delta=\{0=t_0<\cdots<t_n=T\}$, and $|\Delta| =\max_{1\le i \le n} (t_{i}- t_{i-1})$.

The joint quadratic variation of the It\^o integrals $\int_0^t F(ds, f_s)$ and $\int_0^t F(ds,g_s)$ satisfies
\[\langle \int_0^\cdot F(ds,f_s),\int_0^\cdot  F(ds,g_s) \rangle_t=\int_0^t a(s,f_s,g_s)ds.\]

The following  is the generalized It\^o formula (see Theorem 3.3.1 in \cite{kunita}).
\begin{theorem}[Generalized It\^o formula]\label{ito}
Let $\{F(t,x), x\in O\}$ be a continuous $C^2$-process and a continuous $C^1$-semimartingale with local characteristic belonging to the class $B^{1,0}$ and let $\{X_t, 0\le t\le T\}$ be a continuous semimartingale with values in $O$. Then $\{F(t,X_t), 0\le t\le T\}$ is a continuous semimartingale and satisfies
 \begin{align}
F(t,X_t)=&F(0,X_0)+\int_0^t F(dr,X_r)+\sum_{i=1}^d\int_0^t\dfrac{\partial F}{\partial x_i}(r,X_r) dX_r^i
\notag \\
&+\dfrac{1}{2} \sum_{i,j=1}^{d} \int_0^t
\dfrac{\partial^2F}{\partial x_i\partial x_j}(r,X_r)d\langle
X^i,X^j\rangle_r+\sum_{i=1}^d \langle\int_0^\cdot \dfrac{\partial
F}{\partial x_i}(dr,X_r), X^i\rangle_t\,,\label{e.2.2}
\end{align}
for any $t\in [0,T]$.
\end{theorem}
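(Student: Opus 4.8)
The plan is to adapt the classical proof of It\^o's formula to this setting: localize so that everything is bounded, Taylor expand along a partition, and identify the limit of each group of terms, the only genuinely new feature being the covariation term that appears because $F$ is itself a semimartingale in the time variable. By introducing a suitable sequence of stopping times one may assume that $X_t$ stays in a fixed compact set $K\subset O$, that the local martingale parts of $X$ and of $F(\cdot,x)$ are square-integrable martingales, and that the $C^{1,1}$- and $C^1$-integrability conditions defining the class $B^{1,0}$ hold with the supremum taken over $K$ and with a deterministic bound. Because the local characteristic of $F$ lies in $B^{1,0}$, a Kolmogorov-type continuity argument then furnishes versions of $F$, $\partial F/\partial x_i$ and $\partial^2F/\partial x_i\partial x_j$ that are jointly continuous in $(t,x)$ and uniformly bounded on $[0,T]\times K$, and the kernel It\^o integrals $\int_0^\cdot F(dr,X_r)$ and $\int_0^\cdot\frac{\partial F}{\partial x_i}(dr,X_r)$ are well-defined continuous semimartingales, since condition (\ref{g1}) holds for $f_s=X_s\in K$.

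Next, fix $t$ and a partition $\Delta=\{0=t_0<\dots<t_n=t\}$ with mesh $|\Delta|$, and telescope $F(t,X_t)-F(0,X_0)=\sum_k\{F(t_{k+1},X_{t_{k+1}})-F(t_k,X_{t_k})\}$, splitting each summand as
\[ \{F(t_{k+1},X_{t_{k+1}})-F(t_k,X_{t_{k+1}})\}+\{F(t_k,X_{t_{k+1}})-F(t_k,X_{t_k})\}. \]
For the second bracket I would apply the ordinary $d$-dimensional It\^o formula to the $C^2$ function $x\mapsto F(t_k,x)$ and to the semimartingale $X$ on $[t_k,t_{k+1}]$; summing over $k$, and using the uniform continuity on $[0,T]\times K$ of the first two spatial derivatives to replace the $F(t_k,\cdot)$-derivatives by the $F(r,X_r)$-derivatives and to kill the Taylor remainders as $|\Delta|\to0$, these sums converge in probability to
\[ \sum_{i=1}^d\int_0^t\frac{\partial F}{\partial x_i}(r,X_r)\,dX_r^i+\frac12\sum_{i,j=1}^d\int_0^t\frac{\partial^2F}{\partial x_i\partial x_j}(r,X_r)\,d\langle X^i,X^j\rangle_r. \]

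For the first bracket I would Taylor expand the function $x\mapsto F(t_{k+1},x)-F(t_k,x)$ around $X_{t_k}$, obtaining
\[ F(t_{k+1},X_{t_{k+1}})-F(t_k,X_{t_{k+1}})=\{F(t_{k+1},X_{t_k})-F(t_k,X_{t_k})\}+\sum_{i=1}^d\Bigl\{\tfrac{\partial F}{\partial x_i}(t_{k+1},X_{t_k})-\tfrac{\partial F}{\partial x_i}(t_k,X_{t_k})\Bigr\}(X_{t_{k+1}}^i-X_{t_k}^i)+R_k, \]
with $R_k$ a second-order remainder. Summed over $k$, the first terms converge to $\int_0^t F(dr,X_r)$ by the very definition of the kernel integral; the middle terms are Riemann sums converging in probability, by the stated identity for quadratic variations of kernel integrals, to $\sum_{i=1}^d\langle\int_0^\cdot\frac{\partial F}{\partial x_i}(dr,X_r),X^i\rangle_t$; and $\sum_k R_k\to0$ in probability because the uniform spatial modulus of continuity of $\partial F/\partial x_i$ on $K$ tends to zero while $\sum_k|X_{t_{k+1}}-X_{t_k}|^2$ stays bounded. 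Adding the contributions of the two brackets, observing that each limiting term is a continuous semimartingale, and removing the localization yields (\ref{e.2.2}).

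The delicate point is the cross term in the last display, namely showing that $\sum_k\sum_i\{\frac{\partial F}{\partial x_i}(t_{k+1},X_{t_k})-\frac{\partial F}{\partial x_i}(t_k,X_{t_k})\}(X_{t_{k+1}}^i-X_{t_k}^i)$ converges to the covariation $\sum_i\langle\int_0^\cdot\frac{\partial F}{\partial x_i}(dr,X_r),X^i\rangle_t$ rather than to zero; this is exactly where the hypothesis that the local characteristic belongs to $B^{1,0}$ is used, since it guarantees that the family $\{\partial F/\partial x_i(t,x)\}$ is again a continuous $C^1$-semimartingale with enough joint regularity in $(t,x)$ both to make sense of these kernel integrals and their covariations with $X^i$ and to pass to the limit uniformly on $[0,T]\times K$. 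Everything else is routine once the localization is in place; the complete argument is Theorem 3.3.1 of \cite{kunita}.
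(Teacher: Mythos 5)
The paper does not prove this theorem at all: it is imported verbatim as Theorem 3.3.1 of \cite{kunita}, so your sketch (localization, telescoping along a partition, ordinary It\^o formula in the space variable, Taylor expansion of $x\mapsto F(t_{k+1},x)-F(t_k,x)$ producing the kernel integral and the extra covariation term) is precisely the standard argument behind the cited result, and you defer to Kunita at the end just as the paper does. One small wording fix: the remainder $\sum_k R_k$ should be killed using the uniform continuity on $[0,T]\times K$ of the \emph{second} spatial derivatives of $F$ (available since $F$ is a $C^2$-process), paired with the boundedness in probability of $\sum_k|X_{t_{k+1}}-X_{t_k}|^2$, rather than the modulus of continuity of $\partial F/\partial x_i$ paired with $\sum_k|X_{t_{k+1}}-X_{t_k}|$, which is not bounded.
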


\setcounter{equation}{0}
\section{Feynman-Kac formula}
In this section we establish a general Feynman-Kac formula for the $d$-dimensional heat equation driven by a continuous semimartingale.
Suppose that $F=\{F(t,x), 0\le t\le T, x\in \mathbb{R}^d\}$ is a continuous  semimartingale with local characteristic $(a,b)$. We are going to impose the following condition.

\medskip
\noindent
\textbf{(H1)}  Assume that  $a(t,x,y)$ and $b(t,x)$ are  continuous   and satisfy
\begin{eqnarray}
&&|a(t,x,y)|\le C(1+|x|^\beta+|y|^\beta )  \label{eq7},
 \\
&& | b(t,x)| \le  C(1+|x|^\beta), \label{eq8}
\end{eqnarray}
 for $t\in[0,T]$,
with $0\le \beta <2$.

\medskip
 Consider the stochastic  heat equation
\begin{equation}  \label{eq1}
\left\{
\begin{array}{lc}
  \dfrac{\partial V}{\partial t}(t,x) =\dfrac{1}{2} \triangle V(t,x)+ V  \dfrac {\partial F}{\partial t}(t,x)  &   \\
   V(x,0)=h(x). &   \\
\end{array}
\right.
\end{equation}
An adapted random field $\{V(t,x),  0\le t\le T, x\in \mathbb{R}^d\}$ is called a mild solution to the above equation if $V(t,x)$ satisfies the following integral equation
\begin{equation}  \label{eq11}
V(t,x)=\int_{\mathbb{R}^d}p_t(x-z)h(z)dz+  \int_{\mathbb{R}^d} \left(\int_0^t
p_{t-r}(x-z)V(r,z) F(dr,z) \right) dz ,
\end{equation}
where $p_t(x)= (2\pi t) ^{-\frac d2} \exp(-|x|^2/2t)$.

\begin{theorem}[Feynman-Kac Formula]\label{feynman}
  Let   $h(x) $ be continuous
and with polynomial growth. Then  the process
\begin{equation}
 V(t,x)=E^B\left(h(x+B_t) \exp\left(  \int_0^t F(dr,
x+B_t-B_r)-\frac{1}{2}\int_0^t \bar a(r, x+B_t-B_r)dr  \right)\right),
\label{e.3.2}
\end{equation}
where $B$ is a $d$-dimensional standard Brownian motion
independent of $F$, $E^B$ denotes the mathematical expectation with respect to $B$, and $\bar a(t, x)=a(t, x, x)$, is a mild solution to Equation (\ref{eq1}).
\end{theorem}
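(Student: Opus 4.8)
Sketch of the proof.

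The plan is to verify directly that the random field $V(t,x)$ defined by (\ref{e.3.2}) satisfies the mild equation (\ref{eq11}). Throughout, fix $t>0$ and $x\in\mathbb R^d$ and write $\gamma_r:=x+B_t-B_r$ for $0\le r\le t$, so that (\ref{e.3.2}) reads $V(t,x)=E^B[h(x+B_t)\,G_t]$ with $G_r:=\exp\big(\int_0^rF(d\tau,\gamma_\tau)-\tfrac12\int_0^r\bar a(\tau,\gamma_\tau)\,d\tau\big)$. Since $B$ is independent of $F$, the field $F$ remains a continuous semimartingale with respect to the enlarged filtration $\mathcal F^F_r\vee\sigma(B)$, and $r\mapsto\gamma_r$ is a continuous, $\sigma(B)$-measurable (hence predictable) path; together with the bound $|\bar a(\tau,\gamma_\tau)|+|b(\tau,\gamma_\tau)|\le C\big(1+\sup_{s\le t}|B_s|^\beta\big)<\infty$ coming from \textbf{(H1)}, this makes the kernel It\^o integral $\int_0^rF(d\tau,\gamma_\tau)$ well defined (condition (\ref{g1}) holds). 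The first thing to establish is $V(t,x)\in L^p(\Omega)$ for all $p\ge1$: decomposing $\int_0^\cdot F(d\tau,\gamma_\tau)=M+A$ into its (enlarged-filtration) continuous martingale and bounded-variation parts, with $\langle M\rangle_t=\int_0^t\bar a(\tau,\gamma_\tau)\,d\tau$ and $A_t=\int_0^tb(\tau,\gamma_\tau)\,d\tau$, standard estimates for stochastic exponentials give $E[G_t^p\mid B]\le\exp\big(C_p(1+\sup_{s\le t}|B_s|^\beta)\big)$ almost surely, using $0\le\langle M\rangle_t\le C(1+\sup_{s\le t}|B_s|^\beta)$ and $|A_t|\le C(1+\sup_{s\le t}|B_s|^\beta)$ (by \textbf{(H1)}) and that the exponential of $pM$ is a positive supermartingale. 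By Minkowski's integral inequality and the polynomial growth of $h$, $\|V(t,x)\|_{L^p}\le E^B\big[|h(x+B_t)|\exp\big(\tfrac{C_p}{p}(1+\sup_{s\le t}|B_s|^\beta)\big)\big]$, which is finite because $\beta<2$ makes $\exp\big(c\,\sup_{s\le t}|B_s|^\beta\big)$ integrable of all orders (this is the only place the restriction $\beta<2$ enters). These estimates also supply the uniform integrability needed below, and show that the stochastic integral on the right of (\ref{eq11}) is well defined.

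Next I would show that $G_t$ is the stochastic exponential of $\int_0^\cdot F(dr,\gamma_r)$. Working in the enlarged filtration, the It\^o calculus for kernel integrals of \cite{kunita} (cf. Theorem~\ref{ito}) gives that $Y_r:=\int_0^rF(d\tau,\gamma_\tau)$ is a continuous semimartingale whose martingale part has quadratic variation $\int_0^r\bar a(\tau,\gamma_\tau)\,d\tau$, whence ordinary It\^o's formula applied to the exponential yields $dG_r=G_r\,F(dr,\gamma_r)$, that is,
\[
G_t=1+\int_0^tG_r\,F(dr,\gamma_r).
\]
Multiplying by $h(x+B_t)$ and taking $E^B$, the constant term produces $E^B[h(x+B_t)]=\int_{\mathbb R^d}p_t(x-z)h(z)\,dz$, which is the first summand on the right-hand side of (\ref{eq11}).

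It then remains to identify the stochastic-integral term, i.e. to prove
\[
E^B\Big[h(x+B_t)\int_0^tG_r\,F(dr,\gamma_r)\Big]=\int_{\mathbb R^d}\Big(\int_0^tp_{t-r}(x-z)\,V(r,z)\,F(dr,z)\Big)\,dz .
\]
The strategy is to interchange $E^B$ with the stochastic integral by a stochastic Fubini argument, approximating $\int_0^tG_r\,F(dr,\gamma_r)$ by Riemann sums $\sum_kG_{r_k}\big(F(r_{k+1},\gamma_{r_k})-F(r_k,\gamma_{r_k})\big)$. For a single mesh point one conditions on $\gamma_{r_k}$: since $B_t-B_{r_k}$ is independent of $(B_s)_{s\le r_k}$ and of $F$, conditioning on $\{\gamma_{r_k}=z\}$ (i.e. on $\{B_t-B_{r_k}=z-x\}$) leaves the joint law of $\big((B_s)_{s\le r_k},F\big)$ unchanged, while $h(x+B_t)=h(z+B_{r_k})$, $\gamma_\tau=z+B_{r_k}-B_\tau$ for $\tau\le r_k$, and $\gamma_{r_k}$ has density $p_{t-r_k}(x-z)$ under $E^B$. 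Hence
\[
E^B\big[h(x+B_t)\,G_{r_k}\mid\gamma_{r_k}=z\big]=E^{B'}\Big[h(z+B'_{r_k})\exp\Big(\int_0^{r_k}F(d\tau,z+B'_{r_k}-B'_\tau)-\tfrac12\int_0^{r_k}\bar a(\tau,z+B'_{r_k}-B'_\tau)\,d\tau\Big)\Big]=V(r_k,z),
\]
with $B'$ a standard Brownian motion independent of $F$. Summing over $k$ and passing to the limit along the Riemann sums in $L^2(\Omega)$, using the moment bounds of the first step, gives the displayed identity; combined with the previous step this is exactly (\ref{eq11}), so $V(t,x)$ is a mild solution.

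The main obstacle is making the interchange of the last step rigorous: both the integrand $G_r$ and the spatial evaluation point $\gamma_r$ of the kernel integral are random through $B$, so the passage to the limit in the Riemann sums must be controlled uniformly. Besides the $L^p$ bounds of the first step, this uses the spatial regularity of $x\mapsto F(r,x)$; consequently, when $a$ and $b$ are only continuous as in \textbf{(H1)}, one should first run the whole argument for mollified local characteristics belonging to the class $B^{1,0}$ (so that Theorem~\ref{ito} is directly available), and then remove the regularization by a limiting argument, again relying on the uniform moment estimates, which are available precisely because $\beta<2$.
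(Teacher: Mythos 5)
Your proposal is correct in outline, but it reaches the mild equation by a genuinely different route than the paper, so a comparison is worth recording. Your first step (the $L^p$ bound for $V(t,x)$ via the supermartingale property of the exponentiated martingale part and the integrability of $\exp\bigl(c\sup_{s\le t}|B_s|^\beta\bigr)$ for $\beta<2$) coincides with the paper's Step 1, including its role in making the stochastic integral in (\ref{eq11}) well defined. After that the paper mollifies $F$ and $h$, applies Kunita's generalized It\^o formula (Theorem \ref{ito}) to $\Phi(t,y)=\int_0^t F(dr,x+y-B_r)-\frac12\int_0^t\bar a(r,x+y-B_r)\,dr$ along $y=B_t$, then the ordinary It\^o formula to $h(x+B_t)e^{Y_t}$, takes $E^B$ and recognizes the \emph{strong} form of the equation (the second-derivative terms reassemble into $\partial^2 V/\partial x_i^2$ by differentiating under $E^B$), and finally removes the mollification by passing to the limit term by term in the mild equation. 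You instead fix $t$, enlarge the filtration by $\sigma(B)$ so that $\gamma_r=x+B_t-B_r$ is predictable (legitimate, since independence of $B$ and $F$ preserves the semimartingale property and the local characteristic), identify $e^{Y_\cdot}$ as the Dol\'eans exponential of the kernel integral via the ordinary It\^o formula, and then recover (\ref{eq11}) directly through the conditioning identity $E^B[h(x+B_t)G_{r_k}\mid\gamma_{r_k}=z]=V(r_k,z)$ applied to left-point Riemann sums --- a Chapman--Kolmogorov-type argument that the paper never uses. Your route avoids the generalized It\^o formula and any $C^3$ regularity in the main computation (mollification becomes a purely technical device, as you note), and it produces the mild form directly; the paper's route additionally shows that $V$ is a strong solution under smoothness, and its $\epsilon\to 0$ limit is taken on the four fixed terms of the mild equation rather than on partition-dependent Riemann sums, which makes the interchange of limits more transparent. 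Be aware that the real work in your scheme sits exactly where you flag it: convergence of the left-point sums to $\int_0^t p_{t-r}(x-z)V(r,z)F(dr,z)$ needs continuity of $r\mapsto V(r,z)$ in probability, uniform (in the partition) integrability of the sums (BDG together with \textbf{(H1)} gives uniform $L^2$ bounds), a jointly measurable version of the kernel integral in its spatial argument to justify freezing $\gamma_{r_k}$, and a domination argument for the $dz$ integral; none of these should fail, but carried out in full they are of comparable weight to the paper's Step 3.
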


\noindent\begin{proof}

We  divide the proof into three steps.

\medskip
\noindent
\textbf{Step 1}.\quad
First we show that the process  (\ref{e.3.2}) is well defined.   In the sequel we denote by $E$  the mathematical expectation in the probability space where $F$ is defined, and   $E^B$ denotes the expectation with respect to the independent Brownian motion $B$. Set
\[
Y_t= \int_0^tF(dr, x+B_t-B_r)-\dfrac{1}{2}\int_0^t\bar a(r,x+B_t-B_r)dr.
\]
Notice that  $ \int_0^tF(dr, x+B_t-B_r)$ is a well defined It\^o stochastic integral, because the process $\{B_t-B_r, 0\le r\le t\}$  is independent of the  semimartingale  $F$, and conditions (\ref{g1}) are satisfied.  Then $V(t,x)= E^B\left(h(x+B_t) \exp(Y_t)\right)$. We claim that this expectation exists and $V(t,x)$ satisfies the following condition for any   $x\in \mathbb{R}^d$ and $p\ge 1$,
\begin{equation}
\label{eq14}
\sup_{0\le t\le T} E |V(t,x) |^p  \le K_1 \exp\left( K_2 |x|^\beta \right),
\end{equation}
where the constants $K_1$ and $K_2$ depend on $p$ and $T$. In particular,    this implies that the stochastic integral in (\ref{eq11}) is well defined.  We can write
\[
E|V(t,x)|^p \le \left( E^B |h(x+B_t|^{2p}\
 E  E^B\exp(2pY_t)  \right)  ^{\frac 12}.
\]
Let us denote by $M(t,x)$ the martingale part of $F(t,x)$. Then we make the decomposition
\[
Y_t= Y_t^{(1)} +Y_t^{(2)} ,
\]
where
\[
Y_t^{(1)} =\int_0^tM(dr, x+B_t-B_r)-p\int_0^t \bar a(r,x+B_t-B_r) dr,
\]
and
\[
Y_t^{(2)} =\int_0^t \left[b(r, x+B_t-B_r)
+\left( p-\frac 12 \right)\bar a(r,x+B_t-B_r)  \right]dr.
\]
Using conditions (\ref{eq7}) and (\ref{eq8}) and taking into account that $\beta <2$, we obtain for all $t\in [0,T]$
\begin{eqnarray*}
E^B\exp(2pY_t^{(2)} )& \le& E^B \left( \exp\left( C  \int_0^t (1+|x+B_t-B_r|^\beta) dr  \right)\right)  \\
& \le& K_1 \exp(K_2 |x|^\beta).
\end{eqnarray*}
On the other hand, taking into account that $\exp(2pY_t^{(1)}) $ is a martingale, we can write
\[
E E^B\exp(2pY_t^{(1)} )=E^B E \exp(2p Y_t^{(1)}) \le 1,
\]
which  completes the proof of (\ref{eq14}).

\medskip
\noindent \textbf{Step 2}.\quad We  now   show    that the process
(\ref{e.3.2}) is a solution to Equation (\ref{eq1}) under some
additional regularity assumptions on the semimartingale $F(t,x)$.
Suppose that  $F(t,x)$ is  a $C^3$-semimartingale, such that the
local characteristic $(a,b)$ satisfies (\ref{eq7}) and (\ref{eq8}).
We also assume that  the functions $D^\alpha_xD^\alpha_y a(t,x,y)$
satisfy the estimate (\ref{eq7}) for all multi-index $\alpha$ with
$1\le |\alpha| \le 2$, and the functions  $D^\alpha_x  b(t,x )$  and
$D^\alpha_x  \bar a(t,x )$ satisfy the estimate (\ref{eq8}) for all
multi-index $\alpha$ with $1\le |\alpha| \le 2$. Clearly this implies
that   the  local characteristic belongs to the class $B^{1,0}$.
Suppose also that the functions  $D^\alpha_x  h(t,x )$     have
polynomial growth for all multi-index $\alpha$ with $1\le |\alpha|
\le 2$.

For fixed $x$, let
\[
\Phi(t,y)=\int_0^tF(dr, x+y-B_r)-\dfrac{1}{2}\int_0^t\bar a(r,x+y-B_r)dr.
\]
  According to Theorem 3.3.3 in \cite{kunita},  $\Phi(t,y)$ is a $C^2$-semimartingale with  local characteristic belonging to the class $B^{1,0}$.
  We can apply the generalized It\^o formula  (\ref{e.2.2})  to the process $Y_t=\Phi(t,B_t)$, and we obtain
\begin{align*}
dY_t
&=  F(dt,x)-\dfrac{1}{2}\bar a(t,x) dt+  \sum_{i=1}^d \left[ \int_0^t
\dfrac{\partial F}{\partial x_i} (dr, x+B_t-B_r)\right] dB_t^i\\
&-\dfrac{1}{2}  \sum_{i=1}^d \left[ \int_0^t \dfrac{\partial \bar a  }{\partial
x_i} (r, x+B_t-B_r) dr \right]   dB_t^i+\dfrac{1}{2}  \sum_{i=1}^d\left[  \int_0^t
\dfrac{\partial^2 F}{\partial x_i^2} (dr, x+B_t-B_r)\right]
dt\\
&-\dfrac{1}{4}   \sum_{i=1}^d \left[\int_0^t \dfrac{\partial^2 \bar a}{\partial
x_i^2} (r, x+B_t-B_r) dr\right]  dt.
\end{align*}
The terms $\langle \int_0^\cdot \frac{\partial F} {\partial
x_i}(dr,x),B_\cdot^i\rangle_t =\langle \int_0^\cdot \frac{\partial
M}{\partial x_i} (dr,x), B_\cdot^i\rangle_t$ vanish  since $M$ and
$B$ are independent.   The quadratic variation of the semimartingale
$Y$ is given by
\begin{equation} \label{eq3}
d\langle Y\rangle_t=
\bar a(t,x) dt +\sum_{i=1}^d\big[\int_0^t \dfrac{\partial F}{\partial
x_i}(dr, x+B_t-B_r)-\dfrac{1}{2}\int_0^t \dfrac{\partial
\bar a}{\partial x_i} (r, x+B_t-B_r)dr \big]^2dt \,.
\end{equation}

Consider the process  $Z(t,x)=h(x+B_t)e^{Y_t}$.
Applying It\^o's formula to $h(x+B_t)e^{Y_t}$  yields
\begin{align}   \notag
Z(t,x)
=&h(x)+  \int_0^t Z(s,x)dY_s+ \sum_{i=1}^d
\int_0^t \dfrac{\partial  h}{\partial x_i }(x+B_s)e^{Y_s}dB^i_s \\ \notag
&+   \dfrac{1}{2}
\sum_{i=1}^d
\int_0^t \dfrac{\partial^2 h}{\partial x_i^2}(x+B_s)e^{Y_s}ds
+ \dfrac{1}{2} \int_0^t  \hat V(s,x) d\langle
Y\rangle_s  \\ \label{eq15}
&+\sum_{i=1}^d \int_0^t \dfrac{\partial h}{\partial
x_i}(x+B_s)e^{Y_s}d\langle B^i, Y\rangle_s \,.
\end{align}
We claim that  the stochastic integrals with respect to $B^i$  in the above expression have zero expectation with respect to $B$.   This is a consequence of  the following properties
\begin{equation}  \label{eq4}
 \int_0^T   E^B Z(t,x)^2
E^B \left| \sum_{i=1}^d \int_0^t \dfrac{\partial F}{\partial
x_i}(dr, x+B_t-B_r)\right| ^2 dt <\infty,
\end{equation}
\begin{equation}  \label{eq5}
 \int_0^T  E^B Z(t,x)^2
E^B \left| \sum_{i=1}^d
 \int_0^t \dfrac{\partial
\bar a}{\partial x_i} (r, x+B_t-B_r)dr \right| ^2 dt <\infty,
\end{equation}
and
\begin{equation}
  \int_0^T E^B \left|\sum_{i=1}^d
\int_0^t \dfrac{\partial  h}{\partial x_i }(x+B_s) \right|^2e^{2Y_s}ds <\infty.
\end{equation}
These properties follow from our additional assumptions. For instance, to show (\ref{eq4}) for the martingale component of $F$, we take the expectation  in the probability space where $F$ is defined and  we use the fact that for any $p\ge 2$
\begin{eqnarray*}
&& E\left| \sum_{i=1}^d \int_0^t \dfrac{\partial M}{\partial
x_i}(dr, x+B_t-B_r)\right| ^p  \\
&& \qquad \le c_p E \left| \sum_{i,j=1}^d\int_0^t\dfrac{\partial^2 a}{\partial
x_i\partial y_j}(r,x+B_t-B_r, x+B_t-B_r)dr \right| ^{\frac p2} \\
&& \qquad \le  C   E\int_0^t  (1+ |B_t-B_r| ^\beta) ds <\infty.
\end{eqnarray*}
Then, taking the expectation with respect to $B$ in  (\ref{eq15}) yields
\begin{align*}
&V(t,x)
= h(x)+\int_0^tV(s,x)F(ds,x)\\
& + \dfrac{1}{2} \sum_{i=1}^d E^B \bigg(\int_0^t   V(s,x)
\bigg\{\int_0^s \dfrac{\partial^2 F}{\partial
x_i^2}(dr,x+B_s-B_r)-\dfrac{1}{2}\int_0^s \dfrac{\partial^2
\bar  a}{\partial
x_i^2}(r,x+B_s-B_r) dr \\
&+\bigg[\int_0^s\dfrac{\partial F}{\partial
x_i}(dr,x+B_s-B_r)-\dfrac{1}{2}\int_0^s \dfrac{\partial
\bar  a}{\partial x_i}(r,x+B_s-B_r) dr\bigg]^2\bigg\}ds\\
&+ \int_0^t \dfrac{\partial^2 h}{\partial
x_i^2}(x+B_s)e^{Y_s}ds\\
&+2\int_0^t\dfrac{\partial h}{\partial x_i}(x+B_s)
e^{Y_s}[\int_0^s \dfrac{\partial F}{\partial
x_i}(dr,x+B_s-B_r)-\dfrac{1}{2}\int_0^s \dfrac{\partial
\bar  a}{\partial x_i} (r,x+B_s-B_r) dr ]ds\bigg)
\end{align*}
Using  that
\[
\dfrac{\partial Y_s}{\partial x_i}=\int_0^s
\dfrac{\partial F}{\partial
x_i}(dr,x+B_s-B_r)-\dfrac{1}{2}\int_0^s \dfrac{\partial
\bar  a}{\partial x_i}(r,x+B_s-B_r)dr\,,
\]
we  obtain easily
\[
V(t,x)= h(x)+\int_0^t V(s,x)F(ds,x)+\dfrac{1}{2}
\sum_{i=1}^d \int_0^t \dfrac{\partial^2 V}{\partial
x_i^2}(s,x)ds\,.
\]
This shows that under some the additional regularity conditions on $F$
and $h$  the process
$u$ defined by  (\ref{e.3.2}) is a strong solution to Equation
(\ref{eq1}), and also a mild solution.

\medskip
\noindent
\textbf{Step 3}.\quad  Consider now the case of a  general semimartingale $F$. For any $\epsilon>0$ we define
\begin{eqnarray*}
M^\epsilon(t,x)&=&\int_{\mathbb{R}^d}M(t,y)p_\epsilon(x-y)dy, \\
B^\epsilon(t,x)&=&\int_{\mathbb{R}^d}B(t,y)p_\epsilon(x-y)dy,
\end{eqnarray*}
and
 $h^{\epsilon}(x)=\int_{\mathbb{R}^d}h(y)p_\epsilon(x-y)dy$.
It is easy to  check that $h^\epsilon$  is infinitely differentiable
and it has polynomial growth together with all its partial
derivatives.    Also $F
^\epsilon(t,x)=M^\epsilon(t,x)+B^\epsilon(t,x)$ is a
$C^3$-semimartingale with local characteristic given by
\[
b^\epsilon(t,x)=\int_{\mathbb{R}^d}b(t,y)p_\epsilon(x-y)dy,
\]
and
\[
a^\epsilon(t,x,y)=\int_0^t\int_{\mathbb{R}^{2d}} a(s,x-z_1,y-z_2)
p_\epsilon(z_1)p_\epsilon(z_2)dz_1dz_2ds.
\]
It easy to check that $a^\epsilon$ and $b^\epsilon$ satisfy the
estimates (\ref{eq7}) and (\ref{eq8}) respectively,  the partial
derivatives $D^\alpha_xD^\alpha_y a^\epsilon(t,x,y)$ satisfy the
estimate (\ref{eq7}) for all multi-index $\alpha$ with $1\le |\alpha|
\le 2$, and the functions  $D^\alpha_x  b^\epsilon(t,x )$    satisfy
the estimate (\ref{eq8}) for all multi-index $\alpha$ with $1\le
|\alpha| \le 2$.   From Step 2 it follows that
\[
V^\epsilon(t,x)=E^B\big\{h^\epsilon(x+B_t)\exp \big(\int_0^t F^\epsilon(dr,
x+B_t-B_r)-\dfrac{1}{2}\int_0^t \bar a^\epsilon(dr,
x+B_t-B_r)\big)\big\}
\]
is the strong solution to
\begin{equation} \label{heat1}
\left\{
\begin{array}{lc}
  \dfrac{\partial V^\epsilon}{\partial t}(t,x) =\dfrac{1}{2}
  \triangle V^\epsilon(t,x)+ V^\epsilon  \dfrac  {\partial F^\epsilon} {\partial t} (t,x)  &   \\
   V^\epsilon(x,0)=h^\epsilon(x) &   \\
\end{array}
\right.
\end{equation}
As a consequence, it is also a mild solution to
(\ref{heat1}),   namely,
\[
V^\epsilon(t,x)=\int_{\mathbb{R}^d} p_t(x-z) h^\epsilon(z)dz+ \int_{\mathbb{R}^d}
 \left( \int_0^t  p_{t-r}(x-z)V^\epsilon(r,z)F^\epsilon(dr,z) \right)  dz   .
\]
Finally,  we are going to take the limit as $\epsilon$ tends to zero in each term of the above expression in order to deduce  the Feynman-Kac formula of
$V $.  The estimate   (\ref{eq7}) implies
\[
\sup\limits_{\epsilon>0}E\exp \left(p \int_0^t
|\bar  a^\epsilon(r,x+B_t-B_r)|  dr\right)<\infty,  .
\]
for all $p\ge 1$ and, as a consequence,
  $V^\epsilon(t,x)$ converges to $V(t,x)$ in $L^p$ for all $p\ge 1$. Clearly,
  \[
  \lim_{\epsilon \downarrow 0} \int_{\mathbb{R}^d} p_t(x-z) h^\epsilon(z)dz=\int_{\mathbb{R}^d} p_t(x-z) h(z)dz,
  \]
and
 \[
  \lim_{\epsilon \downarrow 0}\int_0^t \int_{\mathbb{R}^d}
p_{t-r}(x-z)V^\epsilon(r,z)     b^\epsilon(r,z)dzdr=\int_0^t \int_{\mathbb{R}^d}
p_{t-r}(x-z)V (r,z)     b (r,z)dzdr,
  \]
  also in $L^p$ for all $p\ge 1$.  The following limits in $L^p$ are also easy to check:
  \[
  \lim_{\epsilon \downarrow 0}  \int_{\mathbb{R}^d}\left( \int_0^t
p_{t-r}(x-z)V^\epsilon(r,z) [M^\epsilon(dr,z) -M( dr,z) ] \right)  dz  =0
\]
and
 \[
  \lim_{\epsilon \downarrow 0}  \int_{\mathbb{R}^d}\left( \int_0^t
p_{t-r}(x-z)[V^\epsilon(r,z)- V(r,z)]  M (dr,z)\right)  dz    =0.
\]
This completes the proof of the theorem.
\end{proof}

\setcounter{equation}{0}
\section{Stochastic heat equation: H\"older continuity of the solution}
Consider  the following  nonlinear stochastic partial differential equation:
\begin{equation}\label{heateq}
\begin{cases}
\dfrac{\partial u}{\partial t}=\dfrac{1}{2}\triangle u
+b(u)+\sigma(u) \dot {W}(t,x)\,,\quad  t\ge 0\,,\quad
x\in \mathbb{R}^d &\\
u(0,x)=u_0(x)\,. &
\end{cases}
\end{equation}
where $W$  is  the Gaussian family  introduced in Section 2.1 with covariance function given by (\ref{e1}).
Let us recall that an adapted   random field  $\{u(t, x)\,, t\ge
0\,, x\in \mathbb{R}^d \} $ is called a mild solution to Equation
(\ref{heateq})  if $u$ satisfies the following integral equation.
\begin{align}\label{heatint}
u(t,x)=&\int_{\mathbb{R}^d}
p_t(x-z)u_0(z)dz+\int_0^t\int_{\mathbb{R}^d}
p_{t-r}( x-z)b(u(r,z))dzdr\nonumber\\
  & +  \int_0^t\int_{\mathbb{R}^d}
p_{t-r}( x-z)\sigma(u(r,z))W(dr,dz) \,,
\end{align}
where the stochastic integral  is defined as the integral of an $\mathcal{H}$-valued predictable process.

We are going to impose the following condition on the covariance function.

\medskip \noindent
(\textbf{H1}) For each $t\ge 0$,
\[
\sup\limits_{x\in\mathbb{R}^d}\displaystyle
 \int_0^t\int_{\mathbb{R}^{2d}} p_{t-s}( x-z_1)p_{t-s}(
x-z_2)|q(z_1,z_2)|dz_1dz_2 ds <\infty\,.
\]

\begin{theorem}\label{existence}
Suppose that $b$ and $\sigma$ are globally Lipschitz continuous
functions and  suppose that  the covariance function $q$ satisfies  (\textbf{H1}).
Let  $u_0(x)$ be a bounded  function in $\mathbb{R}^d$.
Then there exists a unique adapted process $u=\{u(t,x), t\in[0,T],
x\in\mathbb{R}^d\}$ satisfying (\ref{heatint}).  Moreover,
\begin{equation}
\sup\limits_{t\in[0,T], x\in \mathbb{R}^d} E|u(t,x)|^p<\infty,\quad
\forall \ p\ge 2\,. \label{e.4.3}
\end{equation}
\end{theorem}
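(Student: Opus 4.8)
The plan is the classical Picard iteration scheme, with the nonhomogeneous covariance entering only through the estimate of the stochastic convolution. Set $u^{(0)}(t,x)=\int_{\R^d}p_t(x-z)u_0(z)\,dz$, which is bounded because $u_0$ is, and define recursively
\[
u^{(n+1)}(t,x)=\int_{\R^d}p_t(x-z)u_0(z)\,dz+\int_0^t\int_{\R^d}p_{t-r}(x-z)b(u^{(n)}(r,z))\,dz\,dr+\int_0^t\int_{\R^d}p_{t-r}(x-z)\sigma(u^{(n)}(r,z))\,W(dr,dz).
\]
First one checks inductively that, for each $(t,x)$, the map $(r,z)\mapsto p_{t-r}(x-z)\sigma(u^{(n)}(r,z))$ (extended by $0$ for $r>t$) is an $\mathcal{H}$-valued predictable process lying in $L^2(\Omega;\mathcal{H}_\infty)$, so that the stochastic integral above is well defined; for $n=0$ this follows from (\textbf{H1}) and the boundedness of $\sigma(u^{(0)})$, and in general it will be immediate from the moment bounds established next.

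The heart of the matter is the moment estimate. Write $m_n(t)=\sup_{x\in\R^d}\|u^{(n)}(t,x)\|_{L^p(\Omega)}$. For the drift term, since $\int_{\R^d}p_{t-r}(x-z)\,dz=1$ and $|b(v)|\le C(1+|v|)$, Minkowski's inequality gives a bound $C\int_0^t(1+m_n(r))\,dr$. For the stochastic term we use that it is a continuous martingale in $t$ with quadratic variation $\int_0^t\|p_{t-r}(x-\cdot)\sigma(u^{(n)}(r,\cdot))\|_{\mathcal{H}}^2\,dr$; applying the Burkholder--Davis--Gundy inequality, then Minkowski's integral inequality, the bound $\|f\|_{\mathcal{H}}^2\le\int_{\R^{2d}}|f(z_1)|\,|f(z_2)|\,|q(z_1,z_2)|\,dz_1dz_2$, the Cauchy--Schwarz inequality in $\Omega$, the symmetry of $q$, and the linear growth of $\sigma$, one arrives at
\[
\Big\|\int_0^t\int_{\R^d}p_{t-r}(x-z)\sigma(u^{(n)}(r,z))\,W(dr,dz)\Big\|_{L^p(\Omega)}^2\le C\int_0^t\big(1+m_n(r)\big)^2\,\psi(t-r,x)\,dr,
\]
where $\psi(s,x)=\int_{\R^{2d}}p_s(x-z_1)p_s(x-z_2)|q(z_1,z_2)|\,dz_1dz_2$ and hypothesis (\textbf{H1}) is precisely the statement that $\sup_x\int_0^t\psi(s,x)\,ds<\infty$ for every $t$.

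Combining the three bounds gives, for $t\le T$,
\[
m_{n+1}(t)\le C_0+C\int_0^t(1+m_n(r))\,dr+C\Big(\sup_x\int_0^t(1+m_n(r))^2\,\psi(t-r,x)\,dr\Big)^{1/2}.
\]
A Gronwall-type argument then yields $\sup_{n}\sup_{t\le T}m_n(t)<\infty$: one proves the bound first on a time interval $[0,T_0]$ short enough that the stochastic contribution becomes a contraction, using that $\sup_x\int_0^{T_0}\psi(s,x)\,ds$ is small, and then iterates this over $[T_0,2T_0],[2T_0,3T_0],\dots$, taking the value at the left endpoint as new initial datum, until $[0,T]$ is covered; passing to the limit yields (\ref{e.4.3}). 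Convergence of the iterates is obtained from the same estimates applied to the differences $u^{(n+1)}-u^{(n)}$, this time invoking the \emph{global Lipschitz} property of $b$ and $\sigma$ in place of linear growth: on a short interval one gets $\sup_{t\le T_0}\sup_x\|u^{(n+1)}(t,x)-u^{(n)}(t,x)\|_{L^p}\le\theta\sup_{t\le T_0}\sup_x\|u^{(n)}(t,x)-u^{(n-1)}(t,x)\|_{L^p}$ with $\theta<1$, so that $(u^{(n)})$ is Cauchy in $L^p(\Omega)$, uniformly in $(t,x)$, and its limit $u$ inherits (\ref{e.4.3}) and solves (\ref{heatint}). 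Uniqueness follows identically: for two mild solutions $u,v$, the quantity $D(t)=\sup_x\|u(t,x)-v(t,x)\|_{L^p}$ satisfies a Gronwall-type inequality with $D(0)=0$, hence $D\equiv0$.

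The main obstacle is the stochastic-integral estimate combined with the Gronwall step. Because the covariance is nonhomogeneous, the kernel $\psi(s,x)$ genuinely depends on $x$, whereas (\textbf{H1}) controls it only after integrating in $s$ and taking the supremum in $x$. One therefore has to take the supremum in $x$ only \emph{after} integrating in time, and to organize the Gronwall argument on short time intervals which are then patched together, so that the smallness of $\sup_x\int_0^{T_0}\psi(s,x)\,ds$ can be exploited uniformly in $x$; getting this bookkeeping right — rather than any individual inequality — is the delicate part of the proof.
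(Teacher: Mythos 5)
Your proposal is correct and follows essentially the same route as the paper: the paper sets up the map $\Psi$ on the Banach space $\mathbb{B}_p$ of adapted fields with norm $\sup_{t\le T,x}E|u(t,x)|^p$, derives exactly your drift and stochastic-convolution estimates (Lipschitz/linear growth, Burkholder and H\"older combined with (\textbf{H1})), obtains a contraction for $T$ small, and then covers an arbitrary horizon by patching intervals, using that the contraction time does not depend on the initial datum — your explicit Picard iteration with short-interval Gronwall and patching is the iterative form of the same argument. The "delicate point" you flag (needing smallness of $\sup_x\int_0^{T_0}\psi(s,x)\,ds$ for short intervals) is treated at the same level of detail in the paper, which simply asserts the small-$T$ contraction, so your write-up is no less rigorous than the published proof.
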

\def\BB{{\mathbb{B}}}
\begin{proof} Fix $p\ge 2$. Let $\BB_p$ be the Banach space of all adapted random
fields $u$ such that $\|u\|_p<\infty$, where
$\|u\|_p^p=\sup\limits_{t\in[0,T], x\in \mathbb{R}^d} E|u(t,x)|^p$.
On $\BB_p$, define  the following mapping
\begin{align*}
\Psi(u)(t,x):=&\int_{\mathbb R^d}
p_t(x-z)u_0(z)dz+\int_0^t\int_{\mathbb{R}^d}
p_{t-r}( x-z)b(u(r,z))dzdr\nonumber\\
  & +  \int_0^t\int_{\mathbb{R}^d}
p_{t-r}( x-z)\sigma(u(r,z))W(dr,dz) \,.
\end{align*}
It is straightforward to obtain
\begin{align*}
&E\left|\Psi(u)-\Psi(v)\right|^p(t,x)
\le   C\bigg[E\left(\int_0^t \int_{\mathbb{R}^d}
p_{t-s}( x-z)|u(s,z)-v(s,z)|dzds\right)^p\\
&+E\Bigg\{ \int_0^t\int_{\mathbb{R}^{2d}} p_{t-s}( x-z_1)|u(s,z_1)-v(s,z_1)|
p_{t-s}( x-z_2)|u(s,z_2)-v(s,z_2)|\\
& \quad  \times |q(z_1,z_2)| dz_1dz_2ds\Bigg\}^{p/2} \bigg]\,.
\end{align*}
Taking the supremum with respect to  $t$ and $x$, we have
\[
\|\Psi(u)-\Psi(v)\|_p^p\le C\int_0^T \|u-v\|_p^p  ds \le
CT\|u-v\|_p^p\,.
\]
 Consequently, $\Psi$ is a contraction mapping on
$\BB_p$ when $T$  sufficiently small.  This proves the existence and
uniqueness of the solution for some small $T$.  From the above
argument  it is clear that the $T$ such that $\Psi$ is a contraction
is independent of the initial value of the solution. This can be
used to to show the existence and uniqueness of the solution for any
$T$.  The inequality  (\ref{e.4.3}) follows  in a similar way.
\end{proof}

Now we apply the factorization method to  obtain  the H\"older
continuity of $u$.  Fix an arbitrary $\alpha\in (0, 1)$ and  denote
\begin{equation}\label{ya}
Y_\alpha(r,z)=\int_0^r\int_{\mathbb{R}^d} p_{r-s}(   z-y)\sigma(u(s,y))(r-s)^{-\alpha}W(ds,dy).
\end{equation}
The semigroup property of  the heat kernel and the stochastic
Fubini's theorem yield
\begin{align}\label{fubini}
&\int_0^t\int_{\mathbb{R}^d}
p_{t-s}( x-y)\sigma(u(s,y))W(ds,dy) \nonumber\\
=&\dfrac{\sin(\pi\alpha)}{\pi} \int_0^t\int_{\mathbb{R}^d}
p_{t-r}( x-z)(t-r)^{\alpha-1}Y_\alpha(r,z)dzdr.
\end{align}
Consider the following stronger condition on the covariance function.

\medskip \noindent
(\textbf{H1a}) There exists $\gamma>-1$ such that for each $t\ge 0$,
\[
\sup\limits_{x\in\mathbb{R}^d}\displaystyle \int_{\mathbb{R}^{2d}}
p_t(x-z_1)p_t(x-z_2) |q(z_1,z_2) |dz_1dz_2<Ct^{\gamma}.
\]

\begin{lemma}\label{bound}
Let the assumptions of  Theorem \ref{existence} be satisfied.  Assume
the covariance function $q$ satisfies  (\textbf{H1a}).
Then for any fixed $T>0, p\ge1, \alpha \in
(0,\dfrac{1+\gamma}{2}),$ we have
$$\sup\limits_{r\in [0,T], z\in\mathbb{R}^d}E(|Y_\alpha(r,z)|^p)<\infty.$$
\end{lemma}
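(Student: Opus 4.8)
The plan is to estimate the $p$-th moment of $Y_\alpha(r,z)$ directly, treating it as the Skorohod/It\^o integral of an $\mathcal H$-valued predictable process and invoking the Burkholder-Davis-Gundy inequality for Hilbert-space-valued stochastic integrals. First I would assume $p\ge 2$ (the case $p\ge 1$ follows by Jensen/H\"older once the result is known for $p=2$). By the BDG inequality applied to the martingale $M_r = \int_0^r\int_{\mathbb R^d} p_{r-s}(z-y)\sigma(u(s,y))(r-s)^{-\alpha}W(ds,dy)$,
\[
E\big(|Y_\alpha(r,z)|^p\big)\le c_p\, E\left(\int_0^r (r-s)^{-2\alpha}\!\!\int_{\mathbb R^{2d}} p_{r-s}(z-y_1)p_{r-s}(z-y_2)\,\sigma(u(s,y_1))\sigma(u(s,y_2))\,q(y_1,y_2)\,dy_1dy_2\,ds\right)^{p/2}.
\]
The inner double spatial integral is nonnegative because $q$ is nonnegative definite, so I can bound $|\sigma(u(s,y_i))|\le C(1+|u(s,y_i)|)$ by Lipschitz growth and pull the moments of $u$ out: using H\"older in $(s,y_1,y_2)$ against the (finite, by (\textbf{H1a})) measure $(r-s)^{-2\alpha}p_{r-s}(z-y_1)p_{r-s}(z-y_2)q(y_1,y_2)\,dy_1dy_2\,ds$, together with the uniform bound $\sup_{s,y}E|u(s,y)|^p<\infty$ from (\ref{e.4.3}), one reduces matters to showing that
\[
\int_0^r (r-s)^{-2\alpha}\left(\sup_{z}\int_{\mathbb R^{2d}} p_{r-s}(z-y_1)p_{r-s}(z-y_2)|q(y_1,y_2)|\,dy_1dy_2\right)ds<\infty
\]
uniformly for $r\in[0,T]$.

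Now I would invoke condition (\textbf{H1a}): the inner supremum is bounded by $C(r-s)^\gamma$, so the integral is dominated by $C\int_0^r (r-s)^{\gamma-2\alpha}\,ds$, which is finite (and bounded uniformly in $r\le T$) precisely when $\gamma-2\alpha>-1$, i.e. $\alpha<\frac{1+\gamma}{2}$ — exactly the hypothesis of the lemma. Collecting constants gives $\sup_{r\in[0,T],z}E|Y_\alpha(r,z)|^p\le C_{p,T,\gamma,\alpha}\big(1+\|u\|_p^p\big)<\infty$, which is the claim.

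The routine parts are the BDG estimate and the H\"older rearrangement; the one point requiring a little care is the justification that $Y_\alpha(r,z)$ is well defined as a stochastic integral at all, i.e. that the integrand $(s,y)\mapsto p_{r-s}(z-y)\sigma(u(s,y))(r-s)^{-\alpha}\mathbf 1_{[0,r]}(s)$ belongs to $L^2(\Omega;\mathcal H_\infty)$ — but this is contained in the finiteness computation above, since the $\mathcal H_\infty$-norm is exactly the quantity estimated. I expect the main (minor) obstacle to be bookkeeping the interplay between the singular weight $(r-s)^{-\alpha}$ and the covariance decay $(r-s)^\gamma$ so that the exponent condition $\alpha<\frac{1+\gamma}{2}$ emerges cleanly; everything else is a direct application of Theorem \ref{existence} and (\textbf{H1a}). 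Note also that (\ref{fubini}) itself needs $\alpha$ in a suitable range for the stochastic Fubini step, but that is a separate issue from the moment bound proved here.
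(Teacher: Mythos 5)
Your proposal is correct and follows essentially the same route as the paper: apply the Burkholder--Davis--Gundy inequality to the stochastic integral defining $Y_\alpha(r,z)$, pull out the uniform moment bound on $\sigma(u)$ (via H\"older against the measure $(r-s)^{-2\alpha}p_{r-s}(z-y_1)p_{r-s}(z-y_2)|q(y_1,y_2)|\,dy_1dy_2\,ds$), and then use (\textbf{H1a}) to reduce everything to $\int_0^r (r-s)^{\gamma-2\alpha}ds<\infty$, which holds exactly when $\alpha<\frac{1+\gamma}{2}$. This matches the paper's argument, with your remarks on well-definedness of the integral being a harmless elaboration.
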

\begin{proof}
Since $\sup\limits_{r\in [0,T],
z\in\mathbb{R}^d}E(|u(r,z)|^p)<\infty$ from Theorem
\ref{existence}, and $\sigma$ is Lipschitz continuous,  we have
$$\sup\limits_{r\in [0,T],
z\in\mathbb{R}^d}E(|\sigma(u(r,z))|^p)<\infty.$$
Then we can write
\begin{align*}
E|Y_\alpha(r,z)|^p
\le &
C\bigg(E\int_0^r\int_{\mathbb{R}^{2d}}p_{r-s}(   z-y_1)p_{r-s}(   z-y_2)\\
&\quad   \times \sigma(u(s,y_1))\sigma(u(s,y_2))(r-s)^{-2\alpha}q(y_1,y_2)dy_1dy_2ds
\bigg)^\frac{p}{2}\\
\le&C\sup\limits_{r\in [0,T],
z\in\mathbb{R}^d}E(|\sigma(u(r,z))|^p)\\
&\quad  \times \bigg(\int_0^r\int_{\mathbb{R}^{2d}}p_{r-s}(   z-y_1)p_{r-s}(
z-y_2)(r-s)^{-2\alpha}|q(y_1,y_2)|dy_1dy_2ds
\bigg)^\frac{p}{2}\\
\le &C \left(\int_0^r (r-s)^{\gamma-2\alpha}ds
\right)^\frac{p}{2}<\infty.
\end{align*}
\end{proof}

Equation (\ref{fubini})  and Lemma  \ref{bound}   constitute  the main ingredients
to prove the following theorem concerning the H\"older continuity of the solution $u$.
\begin{theorem}\label{holder}
Suppose that $b$ and $\sigma$ are globally Lipschitz continuous.
Assume (\textbf{H1a}) and suppose that
   $u_0(x)$ is bounded    and
$\rho$-H\"older continuous.
Then the   solution
$u$ to the equation (\ref{heateq})  is  a.s. $\beta_1$-H\"older continuous in the  time
variable $t$
and $\beta_2$-H\"older continuous in  the space variable $x$ for any $\beta_1 \in
(0,\dfrac{1}{2}[\rho\wedge(1+\gamma)])$ and $\beta_2 \in
(0,\rho\wedge(1+\gamma))$, respectively.
\end{theorem}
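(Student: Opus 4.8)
The strategy is the classical factorization/Kolmogorov route. Write $u(t,x) = I_0(t,x) + I_b(t,x) + I_\sigma(t,x)$ for the three terms in the mild formulation \eqref{heatint}, and estimate the H\"older increments of each piece separately in $L^p$, then invoke Kolmogorov's continuity criterion (taking $p$ large). For the deterministic part $I_0(t,x) = \int_{\mathbb{R}^d} p_t(x-z) u_0(z)\,dz$, the spatial increment $|I_0(t,x)-I_0(t,x')|$ is controlled by $\rho$-H\"older continuity of $u_0$ via a change of variables $z \mapsto z + (x-x')$, giving a bound of order $|x-x'|^\rho$; the time increment uses $p_t - p_s = \int_s^t \frac12 \Delta p_r\,dr$ together with the standard heat-kernel estimate $\|\Delta p_r\|_{L^1} \le C r^{-1}$ and $\rho$-H\"older continuity, yielding order $|t-s|^{\rho/2}$ (one first subtracts the limiting constant $\int u_0$ is not constant, so instead one uses the semigroup bound directly). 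The drift term $I_b$ is the easiest: since $\sup_{t,x} E|b(u(t,x))|^p < \infty$ by Theorem \ref{existence} and Lipschitz growth of $b$, the map $(t,x) \mapsto I_b(t,x)$ has increments bounded by deterministic integrals of $p_{t-r}$ against constants, which are Lipschitz in $x$ and H\"older of order close to $1$ in $t$ — in both cases better than the exponents claimed.

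The stochastic term $I_\sigma$ is where the factorization identity \eqref{fubini} and Lemma \ref{bound} do the work. Using \eqref{fubini} we rewrite
\[
I_\sigma(t,x) = \frac{\sin(\pi\alpha)}{\pi} \int_0^t\int_{\mathbb{R}^d} p_{t-r}(x-z)(t-r)^{\alpha-1} Y_\alpha(r,z)\,dz\,dr,
\]
with $\alpha \in (0, \frac{1+\gamma}{2})$ chosen later, and $\sup_{r,z} E|Y_\alpha(r,z)|^p < \infty$ by Lemma \ref{bound}. Now $I_\sigma$ is, pathwise, a deterministic convolution of the heat semigroup against the bounded-in-$L^p$ field $(t-r)^{\alpha-1}Y_\alpha(r,z)$, so Minkowski's integral inequality reduces everything to estimating $L^1(dr\,dz)$-norms of differences of the kernel $p_{t-r}(x-z)(t-r)^{\alpha-1}$. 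For the spatial increment one uses $\int_{\mathbb{R}^d} |p_s(x-z) - p_s(x'-z)|\,dz \le C (|x-x'| s^{-1/2}) \wedge 1 \le C |x-x'|^\theta s^{-\theta/2}$ for any $\theta \in [0,1]$, and then $\int_0^t (t-r)^{\alpha - 1 - \theta/2}\,dr$ converges provided $\alpha > \theta/2$; optimizing, one can take $\theta$ as close to $2\alpha$ as desired, hence $\theta$ arbitrarily close to $1+\gamma$, giving spatial H\"older exponent any $\beta_2 < \rho \wedge (1+\gamma)$ (the $\rho$ coming from $I_0$). For the time increment one splits $I_\sigma(t,x) - I_\sigma(s,x)$ into the part from $(s,t]$ and the part from $(0,s]$; the first is bounded by $\int_s^t (t-r)^{\alpha-1}\,dr \sim |t-s|^\alpha$, and the second uses $\int_{\mathbb{R}^d}|p_{t-r}(x-z)(t-r)^{\alpha-1} - p_{s-r}(x-z)(s-r)^{\alpha-1}|\,dz$, controlled by the semigroup difference estimate $\|p_\tau - p_{\tau'}\|_{L^1} \le C(|\tau - \tau'|/\tau')^{1/2} \wedge 1 $ together with the H\"older continuity of $\tau \mapsto \tau^{\alpha-1}$; this yields time exponent close to $\alpha$, and since $\alpha$ ranges up to $\frac{1+\gamma}{2}$ and the worst of the three terms is $\rho/2$ from $I_0$, one gets $\beta_1 < \frac12[\rho \wedge (1+\gamma)]$.

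\textbf{Main obstacle.} The only genuinely delicate point is the time-increment estimate of the stochastic term $I_\sigma(t,x) - I_\sigma(s,x)$: one must carefully handle the interplay between the singular weight $(t-r)^{\alpha-1}$ (integrable since $\alpha > 0$ but singular at $r=t$) and the difference of heat kernels at two different times, making sure the exponents in the resulting Beta-type integrals $\int_0^s (s-r)^{\alpha-1-\theta/2}|t-s|^{\theta/2}\,dr$ and $\int_s^t(t-r)^{\alpha-1}\,dr$ stay in the convergent range while still summing to the claimed H\"older exponent. The bookkeeping is routine once one fixes, for a target $\beta_1 < \frac12[\rho\wedge(1+\gamma)]$, a value $\alpha$ with $\beta_1 < \alpha < \frac{1+\gamma}{2}$ and then $p$ large enough that $\beta_i - d/p$ exponents remain positive in the Kolmogorov criterion; I would also remark that once H\"older continuity of $u$ is known one can, if desired, bootstrap the estimate of $b(u)$ and $\sigma(u)$, but it is not needed here since the $L^p$ bounds from Theorem \ref{existence} already suffice.
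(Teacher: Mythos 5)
Your plan is correct and follows the same core route as the paper's proof: the factorization identity (\ref{fubini}) combined with the uniform moment bound of Lemma \ref{bound}, reduction of the stochastic term to $L^1(dz\,dr)$ estimates of increments of the weighted kernel $p_{t-r}(x-z)(t-r)^{\alpha-1}$ with $\alpha<\frac{1+\gamma}{2}$, the semigroup/H\"older bound for the initial-condition term, and Kolmogorov's criterion with $p$ large (the paper leaves this last step implicit, deferring to the argument of Sanz-Sol\'e and Sarr\`a). The one structural difference is the drift term: the paper changes variables so as to compare $b(u(r+h,z))$ with $b(u(r,z))$ (resp.\ $u(r,z+a)$ with $u(r,z)$) under the \emph{same} kernel, which produces a self-referential integral inequality for $\sup_x E|u(t+h,x)-u(t,x)|^p$ that is then closed by Gronwall's lemma, whereas you keep $b(u(r,z))$ fixed and compare the heat kernels, so that (\ref{e.4.3}) plus Minkowski's inequality gives a direct, non-self-referential bound and Gronwall is never invoked; both are valid, yours being slightly more economical, the paper's being the variant that survives when a kernel comparison is not convenient. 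Three small imprecisions in your sketch are harmless: the interpolation bound $\int_{\mathbb{R}^d}|p_s(x-z)-p_{s}(x'-z)|\,dz\le C|x-x'|^\theta s^{-\theta/2}$ holds only for $\theta\le 1$, so ``$\theta$ close to $2\alpha$, hence close to $1+\gamma$'' should read $\theta<\min(2\alpha,1)$ --- this costs nothing because $\rho\le 1$ forces $\rho\wedge(1+\gamma)\le 1$ (the paper imposes the same cap $\eta<1$ in its estimate of $J_2$); your $L^1$ bound for $p_\tau-p_{\tau'}$ with exponent $\tfrac12$ is weaker than the true order $|\tau-\tau'|/\tau'$ but still yields every $\beta_1<\frac12[\rho\wedge(1+\gamma)]\le\frac12$; and the garbled parenthetical about the time increment of $P_tu_0$ should simply be the identity $P_{t+h}u_0-P_tu_0=P_t(P_hu_0-u_0)$ together with $\|P_hu_0-u_0\|_\infty\le Ch^{\rho/2}$, which is what the paper's bound $I_1\le Ch^{\rho p/2}$ amounts to.
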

\begin{proof}
It suffices to follow the idea of the proof of  Theorem 2.1 in \cite{sanz}, and we provide a sketch of the proof for the reader's convenience.
 The proof contains two parts for time and space variables, respectively.\\
{\bf Part I}\\
Fix $T, h>0$ and $p\in[2,\infty).$ First we show that
\begin{equation}\label{I}
\sup_{0\le t\le T}\sup_{x\in\mathbb R^d} E(|u(t+h,x)-u(t,x)|^p)\le C(p,T)h^{\eta p},
\end{equation}
for any $\eta\in(0,\frac12[\rho\wedge(1+\gamma)])$.
Let $Y_\alpha$ be as defined in (\ref{ya}) with $\alpha\in(0,\frac{1+\gamma}{2})$ and denote
$P_tf(x)=\int_{\mathbb R^d} p_t(x-z)f(z)dz$.
We have
\[E(u(t+h,x)-u(t,x)|^p)\le C(p,\alpha) \sum_{i=1}^4 I_i(t,h,x),\]
where
\begin{align*}
I_1(t,h,x)&= |P_{t+h}u_0(x)-P_tu_0(x)|^p,\\
I_2(t,h,x)&=E\left(\left|\int_0^t  \int_{\mathbb R^d} [p_{t+h-r}(x-z)(t+h-r)^{\alpha-1}-p_{t-r}(x-z)(t-r)^{\alpha-1}]Y_\alpha(r,z)\right|^pdzdr\right),\\
I_3(t,h,x)&=E\left(\left|\int_t^{t+h}\int_{\mathbb R^d}  p_{t+h-r}(x-z)(t+h-r)^{\alpha-1}Y_\alpha(r,z)\right|^pdzdr\right),\\
I_4(t,h,x)&=E\left(\left|\int_0^{t+h}\int_{\mathbb R^d}  p_{t+h-r}(x-z)b(u(r,z))dzdr-\int_0^{t}\int_{\mathbb R^d}  p_{t-r}(x-z)b(u(r,z))\right|^pdzdr\right).
\end{align*}
For the term $I_1(t,h,x)$, using the fact that $u_0$ is $\rho$-H\"older continuous  we have
$I_1(t,h,x)\le Ch^{\frac{\rho p}2}$.
For any $\alpha\in(0,\frac{1+\gamma}{2})$ set $\psi^\alpha(t,x)=p_t(x)t^{\alpha-1}$.
By H\"older's inequality  and Lemma \ref{bound}, we have
\[I_2(t,h,x)\le C\left(\int_0^t\int_{\mathbb R^d} |\psi^\alpha(t+h-r, x-z)-\psi^\alpha(t-r,x-z)|dzdr\right)^p.\]
Set
\[I_{2,1}(t,h,x)=\int_0^t\int_{\mathbb R^d} \exp\left(-\frac{|x-z|^2}{2(t-r)}\right)|(t+h-r)^{\alpha-1-\frac d2}-(t-r)^{\alpha-1-\frac d2}|dzdr,\]
and
\[I_{2,2}(t,h,x)=\int_0^t\int_{\mathbb R^d}(t+h-r)^{\alpha-1-\frac d2}\left| \exp\left(-\frac{|x-z|^2}{2(t+h-r)}\right)-\exp\left(-\frac{|x-z|^2}{2(t-r)}\right)\right|dzdr.\]
Then $I_2(t,h,x)\le C(I_{2,1}(t,h,x)^p+I_{2,2}(t,h,x)^p)$, and using the same arguments as in the proof of  Theorem 2.1 in  \cite{sanz}  to estimate the terms $I_{2,1}(t,h,x)^p$ and $I_{2,3}(t,h,x)^p$, we obtain  for $\eta\in(0,\alpha)$ that  $I_2(t,h,x)\le C h^{\eta p}$. 
By H\"older's inequality and Lemma \ref{bound}, we have
\[I_3(t,h,x)\le C\left(\int_t^{t+h}\int_{\mathbb R^d} p_{t+h-r}(x-z)(t+h-r)^{\alpha-1}dzdr\right)^p\le Ch^{\alpha p}.\]
A change of variable yields
\[I_4(t,h,x)\le C (I_{4,1}(t,h,x)+I_{4,2}(t,h,x)),\]
with
\begin{align*}
  I_{4,1}(t,h,x)&=E\left(\left|\int_0^h\int_{\mathbb R^d} P_{t+h-r}(x-z)b(u(r,z))dzdr\right|^p\right),\\
  I_{4,2}(t,h,x)&=E\left(\left|\int_0^h\int_{\mathbb R^d} P_{t-r}(x-z)[b(u(r+h,z))-b(u(r,z))]dzdr\right|^p\right),\\
\end{align*}
Since $b$ is Lipschitz, using H\"older's inequality and Equation (\ref{e.4.3}), we have
\[I_{4,1}(t,h,x)\le Ch^p.\]
The Lipschitz property of $b$ also implies
\[I_{4,2}(t,h,x)\le \int_0^t \sup_{z\in \mathbb R^d}E(|u(r+h,z)-u(r,z)|^p)dr.\]
Putting together all estimations for $I_i, i=1,\dots,4$, we obtain
\[\sup_{x\in\mathbb R^d} E(|u(t+h,x)-u(t,x)|^p)\le C\left(h^{p\min(\frac\rho2,\eta,\alpha)}+\int_0^t \sup_{x\in\mathbb R^d} E(|u(r+h,x)-u(r,x)|^p)dr\right).\]
Since $0<\eta<\alpha<\frac{1+\gamma}{2},$ the estimate (\ref{I}) follows by Gronwall's Lemma.

\noindent
{\bf Part II}\\
Now consider the increments in the space variable. We want to show that for any $T>0, p\in[2,\infty), x,a\in\mathbb R^d$ and $\eta\in(0,\rho\wedge(1+\gamma)),$
\begin{equation}\label{II}
\sup_{0\le t\le T}\sup_{x\in\mathbb R^d} E(|u(t,x+a)-u(t,x)|^p)\le C|a|^{\eta p}.
\end{equation}
Fix $\alpha\in(0,\frac{1+\gamma}{2})$. Ee have
\[E(|u(t,x+a)-u(t,x)|^p)\le C\sum_{i=1}^3 J_i(t,x,a),\]
with
\begin{align*}
  J_1(t,x,a)&=|P_tu_0(x+a)-P_tu_0(x)|^p\\
  J_2(t,x,a)&=E\left(\left|\int_0^t\int_{\mathbb R^d} [\psi^\alpha(t-r,x+a-z)-\psi^\alpha(t-r,x-z)]Y_\alpha(r,z)dzdr\right|^p\right),\\
  J_3(t,x,a)&=E\left(\left|\int_0^t\int_{\mathbb R^d} [p_{t-r}(x+a-z)-p_{t-r}(x-z)]b(u(r,y))dzdr\right|^p\right).
\end{align*}
It is easy to show that $J_1(t,x,a)\le C|a|^{\rho p}$. 
For the term $J_2(t,x,a)$, first we have using the mean value theorem,
\[
\int_{\mathbb R^d}|\psi^\alpha(t-r, x+a-z)-\psi^\alpha(t-r,x-z)|dz \le C(t-r)^{\alpha-1-\frac \eta2}|a|^{\eta},
\]
where $\eta\in(0,1)$. Again by H\"older's inequality and Lemma \ref{bound}, for $\alpha\in(0,\frac{1+\gamma}{2}),\eta\in(0,2\alpha\wedge 1),$ we deduce
\[J_2(t,x,a)\le C\left(\int_0^t\int_{\mathbb R^d}|\psi^\alpha(t-r, x+a-z)-\psi^\alpha(t-r,x-z)|dzdr\right)^p\le C|a|^{\eta p}.\]
Finally, by a change of variable, the Lipschitz property of b, and H\"older's inequality,
\begin{eqnarray*}
 J_3(t,x,a)&  \le& E\left(\left|\int_0^t \int_{\mathbb R^d} p_{t-r}(x-y)[b(u(r,z+a)-b(u(r,z)]dzdr\right|^p\right)\\
 & \le&C \int_0^t \sup_{z\in\mathbb R^d} E(|u(r,a+z)-u(r,z)|^p)dr.
  \end{eqnarray*}
Then (\ref{II}) follows from the Gronwall's lemma and the estimates of $J_i,i=1,2,3$.
\end{proof}

 Here are two examples.

\begin{example}A similar  H\"older continuity result was obtained in
\cite{sanz} in the case of an homogeneous covariance function   $q(x,y)=q(x-y)\ge 0$, where $q$  is a nonnegative continuous function on $\mathbb{R}^d \backslash \{0\}$ such that it is the Fourier transform of a non-negative definite tempered measure $\mu$ on $\mathbb{R}^d$, and  for some $\eta\in (0,1)$ we have
\[
\displaystyle
\int_{\mathbb{R}^d}\dfrac{ \mu(d\xi)} {(1+|\xi|^2)^\eta} <\infty.
\]
This condition  implies (\textbf{H1a}).
In fact, we can write
\begin{eqnarray*}
&&\int_{\mathbb{R}^{2d}} p_t(x-z_1) p_ t(x-z_2) q(z_1-z_2) dz_1dz_2
=  \int_{\mathbb{R}^{2d}} p_t(x-z-z_2) p_{t}(x-z_2) q(z) dz_2dz\\
& =& \int_{\mathbb{R}^{d}} p_{2t} ( z)  q(z)  dz
=\int_{\mathbb{R}^{d}}   e^{-2t\xi^2}   \mu( d \xi) \\
& \le &
  \int_{\mathbb{R}^{d}}  e^{-2t(\xi^2+1)}    \mu(d\xi) \le Ct^{-\eta} \int_{\mathbb{R}^d}\dfrac{\mu(d\xi)}{(1+|\xi|^2)^\eta}.
\end{eqnarray*}
 \end{example}

 Theorem \ref{holder} can be applied to noises which do not have an homogeneous spatial covariance like the following example.

\begin{example}
Consider the case where $d=1$ and  the covariance structure in space is that of a bifractional Brownian motion
with parameters $H\in(0,1),K\in(0,1]$, that is,
\[
q(x,y)=2^{-K}\dfrac{\partial^2}{\partial x\partial y}((|x|^{2H} + |y|^{2H})^K - |x-y|^{2HK}),
\]
 where $2KH>1$.
Then, $ B^{H,K}(t,x)= W(\mathbf{1}_{[0,t]\times[0,x]})$, is a bifractional Brownian motion in $x\ge 0$ for each fixed $t$, and formally,
$W(t,x)=\frac{\partial }{\partial x} B^{H,K}( t,x)$.     Then
\[
|q(x,y)|
\le  C[|x|^{2HK-2}+|y|^{2HK-2}+|x-y|^{2HK-2}]
\]
and $\gamma=HK-1\in(-1,0).$  Thus, Theorem \ref{holder}
can be applied to this case.
\end{example}

\setcounter{equation}{0}
\section{Stochastic heat equation: Regularity of the density of the solution}
In this section we consider again the solution
 $u(t,x)$ to
 (\ref{heateq}), and we will impose  the following  condition  on the covariance $q(x,y)$.

 \medskip \noindent
 (\textbf{H2}) $q $ is $\gamma_0$-H\"older continuous for some $\gamma_0>0$, and for  some $\beta \in[0,2)$
 \[
 |q(x_1,x_2)|\le C(1+|x_1|^\beta+|x_2|^\beta).
 \]
In this case we can assume that  the random field  $\{W_t(\varphi)\}$ has a density with respect to the Lebesgue measure on $\mathbb{R}^d$. That means,  we suppose that there exists a zero mean Gaussian random field $\{W_1(t,x), t\ge 0, x\in \mathbb{R}^d\}$  with covariance
 \[
 E(W_1(t,x)W_1(s,y))= (s\wedge t) q(x,y),
 \]
such that  $W_t(\varphi)=      \int_{\mathbb{R}^d} \varphi( x) W_1(t,x)   dx$,  for any $\varphi \in C_0(\mathbb{R}^d)$,  where $q(x,y)$ is positive definite, namely,
 $\int_{\mathbb{R}^{2d}} q(x,y)f(x)f(y) dxdy\ge 0$ for all $f\in L^2(\mathbb{R}^d, dx)$.    The additional regularity conditions imposed on $q$ have allowed us to introduce the density process $W_1(t,x)$, which is a Brownian motion in the time variable and it has the spacial covariance $q$.

From a Theorem of Mercer's type (section 98 on page 245 in \cite{riesy}) we know that
if $\int_{\mathbb{R}^{2d} } |q(x, y)|^2 dxdy<\infty$,  then
\[
q(x, y)=\sum_{n=1}^\infty \lambda_n  e_n(x) e_n(y)\,,\quad
\]
where $e_n, n=1, 2, \cdots$ is an orthonormal basis of $L^2(\mathbb{R}^d, dx)$ and $\sum_{n=1}^\infty
\lambda_n^2<\infty$.  The positive definite property of $q(x,y)$ implies
$\lambda_n\ge 0$.   If we take $C(x, y)=\sum_{n=1}^\infty \sqrt{\lambda_n }  e_n(x) e_n(y)$, then
$q(x, y)=\int_{\mathbb{R}^d}c(\xi,x)c(\xi,y)d\xi$.  Thus it is without loss of generality
for us to assume   that
$q(y_1,y_2)=\int_{\mathbb{R}^d}c(\xi,y_1)c(\xi,y_2)d\xi$   for some
$c(\xi,y)$.    Furthermore, we assume $c(x,y)$  has  polynomial growth.

The following is the main result of this section.

\begin{theorem}\label{smooth}
Assume that $q$ is $\gamma_0$-H\"older continuous for some
$\gamma_0>0$ and satisfies (\textbf{H1a}).  Suppose
\begin{equation}  \label{beta}
q(x_1,x_2)\le C(1+|x_1|^\beta+|x_2|^\beta)
\quad\hbox{for some $\beta \in[0,2)  $}\,.
\end{equation}
 Let  $u_0$ be  bounded and $\rho$-H\"older continuous
for  some $\rho>0$. Suppose that there is a
$x_0\in \mathbb{R}^d $  such that $q(x_0,x_0)>0$ and
 $\sigma(u_0(x_0))\not=0$. Then,
 \begin{itemize}
 \item[(1)] If $b$ and $\sigma$ are  continuous differentiable functions with bounded first order derivatives,  for any $t>0$ and $x\in\mathbb{R}^d$, the probability law of $u(t,x)$ is absolutely continuous with respect to the Lebesgue measure.
 \item[(2)] If  $b$ and $\sigma$ be infinitely differentiable with bounded derivatives of all orders, then for any  $t>0$ and $x\in
\mathbb{R}^d$,
  the probability law of $u(t,x)$  has a smooth density
with respect to Lebesgue measure
\end{itemize}
\end{theorem}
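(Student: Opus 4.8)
The plan is to verify, for fixed $t>0$ and $x\in\mathbb{R}^d$, the hypotheses of the criterion recalled in Section~2.1 applied to $F=u(t,x)$; everything then reduces to a lower bound for $\|Du(t,x)\|_{\mathcal H_\infty}^2=\int_0^t\|D_su(t,x)\|_{\mathcal H}^2\,ds$. First I would establish the required Malliavin differentiability: running the Picard approximations of Theorem~\ref{existence} inside $\mathbb{D}^{1,2}$ (for part~(1)) and inside $\mathbb{D}^{k,p}$ for every $k\ge1,\,p\ge2$ (for part~(2))---which is legitimate since $b$ and $\sigma$ have bounded derivatives of the relevant orders---yields $u(t,x)\in\mathbb{D}^{1,2}$, respectively $u(t,x)\in\mathbb{D}^\infty$, with norms locally bounded in $(t,x)$. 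Differentiating the mild formulation~(\ref{heatint}) then shows that, for $0\le s<t$, the Malliavin derivative $\{D_{s,y}u(t,x):y\in\mathbb R^d\}$ is the mild solution of a linear stochastic heat equation with multiplicative coefficient $\sigma'(u)$, drift coefficient $b'(u)$, and initial condition $p_{t-s}(x-\cdot)\,\sigma(u(s,\cdot))$ prescribed at time $s$.

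Next I would carry out the reduction sketched in the Introduction. With $q(y_1,y_2)=\int_{\mathbb R^d}c(\xi,y_1)c(\xi,y_2)\,d\xi$ and $V_{s,\xi}(t,x)=\int_{\mathbb R^d}c(\xi,y)\,D_{s,y}u(t,x)\,dy$, one has $\|D_su(t,x)\|_{\mathcal H}^2=\int_{\mathbb R^d}|V_{s,\xi}(t,x)|^2\,d\xi$, and for fixed $(s,\xi)$ the field $V_{s,\xi}$ solves~(\ref{g2}) with $V_{s,\xi}(s,\cdot)=c(\xi,\cdot)\,\sigma(u(s,\cdot))$. This is an equation of the form~(\ref{heat}) driven by the semimartingale $F(t,x)=\int_0^t\sigma'(u(r,x))\,W_1(dr,x)+\int_0^t b'(u(r,x))\,dr$, whose local characteristic $b(t,x)=b'(u(t,x))$, $a(t,x,y)=\sigma'(u(t,x))\sigma'(u(t,y))q(x,y)$ is continuous and, by the boundedness of $b',\sigma'$ together with~(\ref{beta}), satisfies \textbf{(H1)} of Section~3 with the same $\beta<2$. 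Hence Theorem~\ref{feynman}, after the obvious time shift, gives $V_{s,\xi}(t,x)=E^B[c(\xi,x+B_{t-s})\,\sigma(u(s,x+B_{t-s}))\,\Theta_{s,t}(B)]$, where
\[
\Theta_{s,t}(B)=\exp\Big(\int_0^{t-s}F(dr,x+B_{t-s}-B_r)-\tfrac12\int_0^{t-s}\bar a(r,x+B_{t-s}-B_r)\,dr\Big)>0 .
\]
Introducing an independent copy $\widetilde B$ of $B$ and integrating in $\xi$ with $\int_{\mathbb R^d}c(\xi,y_1)c(\xi,y_2)\,d\xi=q(y_1,y_2)$ yields
\[
\|D_su(t,x)\|_{\mathcal H}^2=E^{B,\widetilde B}\Big[\sigma\big(u(s,x{+}B_{t-s})\big)\,\sigma\big(u(s,x{+}\widetilde B_{t-s})\big)\,q\big(x{+}B_{t-s},\,x{+}\widetilde B_{t-s}\big)\,\Theta_{s,t}(B)\,\Theta_{s,t}(\widetilde B)\Big].
\]

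The heart of the proof is a lower bound for this quantity obtained by localizing near the point $(0,x_0)$. Since $q$ is continuous with $q(x_0,x_0)>0$ and $\sigma(u_0(x_0))\ne0$, one fixes $r>0$ with $q(y_1,y_2)\ge\tfrac12 q(x_0,x_0)$ and $|u_0(y_1)-u_0(x_0)|$ small whenever $|y_i-x_0|\le r$; by the a.s.\ joint (time-space) H\"older continuity of $u$ furnished by Theorem~\ref{holder}---whose H\"older seminorm lies in $L^p$ for all $p$ because of the estimates of Theorem~\ref{existence}---together with $u(0,\cdot)=u_0$, there is a random $\delta_0\in(0,t/2]$ with $E[\delta_0^{-p}]<\infty$ for every $p$, such that $|\sigma(u(s,y))|\ge\tfrac12|\sigma(u_0(x_0))|$ for all $s\le\delta_0$ and $|y-x_0|\le r$. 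Writing $A_s(B)=\{|x+B_{t-s}-x_0|\le r\}$, restricting the last expectation to $A_s(B)\cap A_s(\widetilde B)$, and using $\Theta_{s,t}\ge e^{-Ct}\mathcal{E}_{s,t}$---where $\mathcal{E}_{s,t}(B)$ is the stochastic exponential of the continuous martingale $\int_0^{\cdot}F(dr,x+B_{t-s}-B_r)$ minus its drift, a genuine $\mathcal{F}$-martingale by \textbf{(H1)}, so that $E[\mathcal{E}_{s,t}(B)\mid B]=1$---one obtains for every $s\le\delta_0$
\[
\|D_su(t,x)\|_{\mathcal H}^2\ \ge\ c_0\,e^{-2Ct}\,\big(E^B[\mathcal{E}_{s,t}(B)\,\mathbf{1}_{A_s(B)}]\big)^2,\qquad c_0=\tfrac18|\sigma(u_0(x_0))|^2\,q(x_0,x_0)>0 .
\]
For part~(1) this is enough: the right-hand side is a.s.\ strictly positive (for instance at $s=\delta_0/2$), hence $\int_0^t\|D_su(t,x)\|_{\mathcal H}^2\,ds>0$ a.s., and the absolute continuity of the law of $u(t,x)$ follows from the first assertion of the criterion in Section~2.1.

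For part~(2) one must upgrade this to $E\big[(\int_0^t\|D_su(t,x)\|_{\mathcal H}^2\,ds)^{-p}\big]<\infty$ for every $p$. Since $\delta_0$ has negative moments of all orders, this reduces to a small-ball estimate for $\int_0^\eta G_s^2\,ds$ with $G_s:=E^B[\mathcal{E}_{s,t}(B)\mathbf{1}_{A_s(B)}]$: one needs $P(\int_0^\eta G_s^2\,ds<\epsilon)\le C_N\epsilon^N$ for every $N$, with $\eta$ coupled to $\epsilon$. The random variables $G_s$ are positive with $E[G_s]=P^B(A_s(B))\ge c>0$ and $E[G_s^k]\le C_k$, uniformly in $s\le t/2$, by the exponential integrability of $\mathcal{E}_{s,t}$ granted by \textbf{(H1)}. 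Converting these into the required super-polynomial decay is the technical core: it is done by partitioning the time interval to exploit the independence of the increments of $W_1$, using the conditional martingale identity $E[\mathcal{E}_{s,t}(B)\mid B]=1$, and controlling the residual contributions---essentially $u(s,\cdot)-u_0(\cdot)$ for small $s$---by Chebyshev's inequality together with Theorems~\ref{existence} and~\ref{holder}, following the pattern of the nondegeneracy arguments in \cite{ballypardoux,nualartquer}. Combined with $u(t,x)\in\mathbb{D}^\infty$, this gives $E[\|Du(t,x)\|_{\mathcal H_\infty}^{-p}]<\infty$ for all $p$, and the smoothness of the density follows from the second assertion of the criterion in Section~2.1.

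I expect this last step---the small-ball/negative-moment estimate---to be the main obstacle: it is the only place where the weak nondegeneracy $\sigma(u_0(x_0))\ne0$, rather than $|\sigma|\ge c>0$, must be exploited, and it requires disentangling the joint dependence of $\mathcal{E}_{s,t}$, $\delta_0$, and $A_s$ on the driving noise and on the auxiliary Brownian motion $B$. The remaining steps are routine adaptations of the Malliavin calculus for SPDEs combined with the Feynman--Kac representation of Theorem~\ref{feynman}.
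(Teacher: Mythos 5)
Your argument coincides with the paper's up to and including the Feynman--Kac representation: the reduction to $V_{s,\xi}$, the identification of the local characteristic, and the identity
\begin{equation*}
\|D_s u(t,x)\|_{\mathcal H}^2
= E^{B,\tilde B}\Bigl[\sigma\bigl(u(s,x+B_{t-s})\bigr)\,\sigma\bigl(u(s,x+\tilde B_{t-s})\bigr)\,
q\bigl(x+B_{t-s},x+\tilde B_{t-s}\bigr)\,\Theta_{s,t}(B)\,\Theta_{s,t}(\tilde B)\Bigr]
\end{equation*}
are exactly the paper's Step 1 and the beginning of Step 2. The gap is in your nondegeneracy step. The integrand in this expectation is \emph{not} of constant sign: $q$ is nonnegative definite but need not be pointwise nonnegative off the diagonal, and $\sigma(u(s,y_1))\sigma(u(s,y_2))$ can be negative even near $x_0$ once $s>0$. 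Therefore ``restricting the expectation to $A_s(B)\cap A_s(\tilde B)$'' does not produce a lower bound --- the discarded contribution may be negative --- so the inequality $\|D_su(t,x)\|_{\mathcal H}^2\ge c_0e^{-2Ct}\bigl(E^B[\mathcal{E}_{s,t}(B)\mathbf{1}_{A_s(B)}]\bigr)^2$ is unjustified. This is precisely the obstruction that distinguishes the colored, nonfactorizable covariance from space--time white noise (where $\|D_su\|^2=\int (D_{s,y}u)^2dy$ has a nonnegative integrand and localization in $y$ is legitimate), and it breaks both your proof of part (1) and the starting point of part (2). A sign-safe localization would have to be performed inside the square in $\int_{\mathbb R^d}V_{s,\xi}(t,x)^2\,d\xi$, which is not what you do.

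Even granting that bound, the core of part (2) is left unproved: knowing $E[G_s]\ge c>0$ and $E[G_s^k]\le C_k$ for $G_s=E^B[\mathcal{E}_{s,t}(B)\mathbf{1}_{A_s(B)}]$ does not yield $P\bigl(\int_0^\eta G_s^2\,ds<\epsilon\bigr)\le C_N\epsilon^N$, and the references you defer to (Bally--Pardoux, Nualart--Quer-Sardanyons) assume $|\sigma|\ge c>0$, so they do not supply the missing argument under the weak hypothesis $\sigma(u_0(x_0))\ne 0$ --- that is exactly the difficulty this theorem addresses. The paper takes a different and self-contained route at this point: writing $H(s)=\|D_su(t,x)\|^2_{\mathcal H}$, it proves $E[H(0)^{-p}]<\infty$ directly by a Jensen-type inequality with respect to the weight $|G_x|$, where $G_x=q(x+B_t,x+\tilde B_t)\sigma(u_0(x+B_t))\sigma(u_0(x+\tilde B_t))$, using that $E^{B,\tilde B}G_x>0$ (this is where $q(x_0,x_0)>0$, the continuity and positive definiteness of $q$, and $\sigma(u_0(x_0))\ne0$ enter), together with exponential moments of $Y$; it then establishes $E|H(s_1)-H(s_2)|^p\le C|s_1-s_2|^{\frac p2\min(\rho,\gamma_0,1+\gamma)}$ and concludes by Lemma \ref{lemma7}, which converts a negative moment at $s=0$ plus H\"older continuity in $s$ into negative moments of $\int_0^tH(s)\,ds$. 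Your proposal would need either this weighting argument or a genuinely new small-ball estimate to close; as written, the two decisive steps are missing.
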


\begin{proof} First we claim that  for all $(t,x)$ the random variable $u(t,x)$ belongs to the Sobolev space $\mathbb{D}^{1,2}$  under condition (1), and to the space $\mathbb{D}^{\infty}$ under condition (2). This follows from standard arguments and we omit the proof (see, for instance \cite{nualart}, Proposition 2.4.4 in the case of the stochastic heat equation).  On the other hand,  the  Malliavin derivative $D_{s,y}u(t,x)$  satisfies the linear stochastic evolution equation
\begin{align*}
D_{s,y}u(t,x)=&p_{t-s}(
x-y)\sigma(u(s,y))+\int_{\mathbb{R}^d}\int_0^t
p_{t-r}( x-z)b'(u(r,z))D_{s,y}u(r,z)dr dz \\
  & + \int_{\mathbb{R}^d}  \int_0^t
p_{t-r}( x-z)\sigma'(u(r,z))D_{s,y}u(r,z)W_1(dr,z)dz\,.
\end{align*}
Denote
\[
F:=\|Du(t,x)\|_{\mathcal{H}_\infty}^2=\displaystyle
\int_0^t\int_{\mathbb{R}^{2d}}D_{s,y_1}u(t,x)D_{s,y_2}u(t,x)q(y_1,y_2)dy_1dy_2ds\,,
\]
where $\|\cdot\|_{\mathcal{H}_\infty}$ is the Hilbert norm
introduced in Section 2. We are going to show only  the statement
(2), and the first one follows from similar arguments. It suffices
to show that    $E\left[ F^{-p}\right] <\infty$ for any $p\ge 1$. We
divide the proof into two   steps.

\noindent\textbf{Step 1}. \quad
Introduce   $V_{s,\xi}(t,x)=\displaystyle\int_{\mathbb{R}^d} c(\xi,y)D_{s,y}u(t,x)dy$.
Then we can write
\[
F=\int_0^t\int_{\mathbb{R}^{d}} V_{s,\xi}(t,x)^2d\xi ds\,.
\]
For any fixed $(s,\xi)$, the random field $\{V_{s,\xi}(t,x), t\ge s, x \in \mathbb{R}^d\}$ satisfies the following
linear stochastic  heat equation for  $t\ge s$, and $ x\in \mathbb{R}^d$,
\begin{align*}
\begin{cases}
\dfrac{\partial V_{s,\xi}}{\partial t}=\dfrac{1}{2}\triangle V_{s,\xi}+b'(u)V_{s,\xi}+\sigma'(u)
V_{s,\xi}\dfrac{\partial W_1}{\partial t}(t,x),  &\\
V_{s,\xi}(s,x)=c(\xi,x)\sigma(u(s,x))\,.   &
\end{cases}
\end{align*}
Consider the continuous semimartingale $\{F(t,x), t\ge 0, x\in \mathbb{R}^d\}$ given by
\[
F(t,x)  = \int_0^t b'(u(r,x))dr+\int_0^t
\sigma'(u(r,x))W_1(dr,x).
\]
The local characteristic of this semimartingale are $b(t,x)=b'(u(t,x))$ and
\[
a(t,x,y) =  \int_0^t\sigma'(u(r,x))\sigma'(u(r,y))q(x,y)dr\,.
\]
Notice that  conditions  (\ref{eq7}) and (\ref{eq8})  hold  because  $b'$ and  $\sigma'$ are bounded and $q$ satisfies  (\ref{beta}).
Then, Theorem  3.1 gives an explicit Feynman-Kac formula for the above  equation. This
means that we have
\begin{align*}
V_{s,\xi}(t,x)=&E^B\bigg[c(\xi,x+B_{t-s})\sigma(u(s,x+B_{t-s}))\\
& \quad \times \exp\left\{\int_s^t
F(dr,x+B_{t-s}-B_{r-s})-\frac{1}{2}\int_s^t
\bar a(r+x,B_{t-s}-B_{r-s})dr\right\}\bigg].
\end{align*}

\noindent
\textbf{Step 2}. \quad  Let
\[
Y(s,t;B)=\displaystyle\int_s^t
F(dr,x+B_{t-s}-B_{r-s})-\frac{1}{2}\int_s^t
\bar a(r+x,B_{t-s}-B_{r-s})dr.
\]
  Then
\begin{align*}
&\int_0^t\int_{\mathbb{R}^d}|V_{s,\xi}(t,x)|^2d\xi ds
=\int_0^t\int_{\mathbb{R}^d}E^{B,\tilde
B}\bigg[c(\xi,x+B_{t-s})c(\xi,x+\tilde
B_{t-s})\\
&\quad  \times  \sigma(u(s,x+B_{t-s}))\sigma(u(s,x+\tilde B_{t-s}))\exp\{Y(s,t;B)+Y(s,t;\tilde B)\}\bigg]d\xi ds\\
 &=\int_0^t E^{B,\tilde B} \bigg[ q(x+B_{t-s},x+\tilde B_{t-s})\\
&\quad \times \sigma(u(s,x+ B_{t-s}))\sigma(u(s,x+\tilde
B_{t-s}))\exp\{Y(s,t;B)+Y(s,t;\tilde B)\} \bigg] ds \\
& =\int_0^t H(s) ds,
\end{align*}
where $\tilde B$ is a standard Brownian motion  independent of $B$.
  If we can show that  $E(H(0)^{-p})<\infty$ for all
$p>1$, and $H(s)$ is H\"older continuous, then by   Lemma
\ref{lemma7} below we deduce
\[
E\left(\int_0^t\int_{\mathbb{R}^d}|V_{s,\xi}(t,x)|^2d\xi
ds\right)^{-p} =E\left(\int_0^t H(s) ds\right)^{-p}
<\infty
\]
for all $p\ge 1$.
The H\"older continuity of $H(s)$
can be verified from  the following inequality:
\[
E|H(s_1)-H(s_2)|^p\le C
|s_2-s_1|^{\frac{p}{2}\min\{\rho,\gamma_0,1+\gamma\}},
\]
 where $C$
is determined by
\[
 \sup\limits_{s\in[0,t]}\bigg\{
E|q(x+B_{t-s},x+\tilde B_{t-s})|^{8p} ,E|\sigma(u(s,x+
B_{t-s}))|^{8p},E\exp\{8pY(s,t;B)\}\bigg\}.
\]
It remains to show that $E(H(0)^{-p})<\infty$. Notice that
\[
H(0)=E^{B,\tilde B} \left( G_x  \exp\{Y(0,t;B)+Y(0,t;\tilde B)\}\right),
\]
where
\[
  G_{x}=q(x+B_{t},x+\tilde B_{t}) \sigma(u_0(x+ B_{t}))\sigma(u_0(x+\tilde B_{t})).
  \]
We can write, by Jensen's inequality,
\begin{align*}
&E\bigg(E^{B,\tilde B}\bigg[G_x\exp\{Y(0,t;B)+Y(0,t;\tilde B)\}\bigg]\bigg)^{-p}\\
=&E\left|E^{B,\tilde B}\bigg[\left|G_x\right|   \mathrm{sign}(G_x)\exp\{Y(0,t;B)+Y(0,t;\tilde B)\}\bigg]\right|^{-p}\\
\le & \bigg[E^{B,\tilde B}
\left|G_x\right|\bigg]^{-p-1}E\bigg[\left|G_x\right|\exp\{-p(Y(0,t;B)+Y(0,t;\tilde
B))\} \bigg]\,.
\end{align*}
Our nondegeneracy hypotheses imply that $E^{B,\tilde B} G>0$, and this allows us to conclude the proof.
\end{proof}

\begin{lemma}\label{lemma7} Let
$\left\{S_t,{0\le t\le 1}\right\}$ be  a non-negative stochastic
process. If $ES_0^{-a}<\infty$ for some $a>0,$ and
$\sup\limits_{0\le s\le t}|S_s-S_0|\le Gt^\gamma$  where G is a
positive random
variable with $EG^b<\infty$ for some $b>0,$ then we have\\
\[
E\left|\int_0^1 S_tdt\right|^{-p}<\infty, \;\;\text{for}\;\;\; 0<p<ab\gamma/(a+b+b\gamma)\,.
\]
In particular, if $a$ and $b$ can be arbitrarily large, then $p$ can
also be chosen arbitrarily large.
\end{lemma}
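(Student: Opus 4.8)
The plan is to bound the negative moment $E|\int_0^1 S_t\,dt|^{-p}$ by splitting on a high-probability event where $S_0$ is not too small and the increment bound $Gt^\gamma$ is effective. First I would fix parameters $\lambda>0$ (a threshold for $S_0$) and $\mu>0$ (a threshold for $G$), to be optimized later, and work on the event $A=\{S_0\ge\lambda\}\cap\{G\le\mu\}$. On $A$, for $t\le t_0:=(\lambda/(2\mu))^{1/\gamma}$ we have $S_t\ge S_0-Gt^\gamma\ge \lambda-\mu t_0^\gamma\ge\lambda/2$, hence $\int_0^1 S_t\,dt\ge\int_0^{t_0\wedge 1}S_t\,dt\ge\frac{\lambda}{2}(t_0\wedge 1)$. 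Since $S_t\ge 0$ everywhere, this gives a clean deterministic lower bound on the integral on $A$, of order $\lambda\cdot t_0\asymp \lambda\cdot(\lambda/\mu)^{1/\gamma}=\lambda^{1+1/\gamma}\mu^{-1/\gamma}$ (assuming $t_0\le 1$, which holds once $\lambda$ is small relative to $\mu$).

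Next I would estimate the complement. By Chebyshev/Markov, $P(S_0<\lambda)=P(S_0^{-a}>\lambda^{-a})\le \lambda^a\,E S_0^{-a}$ and $P(G>\mu)\le\mu^{-b}\,E G^b$, so $P(A^c)\le C(\lambda^a+\mu^{-b})$. On $A^c$ I have no lower bound on the integral, but I can still write, for any $q>p$,
\[
E\Big(\mathbf{1}_{A^c}\Big|\int_0^1 S_t\,dt\Big|^{-p}\Big)\le \Big(E\Big|\int_0^1 S_t\,dt\Big|^{-q}\Big)^{p/q}P(A^c)^{1-p/q},
\]
which reduces matters to knowing $\int_0^1 S_t\,dt$ has \emph{some} finite negative moment; but in fact the cleanest route is to avoid this and instead choose $\lambda$ and $\mu$ as functions of a scale so that the tail probability $P(A^c)$ itself controls the contribution. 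Concretely, I would set $\lambda=\mu^{-b/a}$ so that $\lambda^a=\mu^{-b}$, leaving a single free parameter $\mu$; then on $A$, $\int_0^1 S_t\,dt\ge c\,\lambda^{1+1/\gamma}\mu^{-1/\gamma}=c\,\mu^{-(b/a)(1+1/\gamma)-1/\gamma}$. Writing $\theta:=(b/a)(1+1/\gamma)+1/\gamma$, on $A$ the integral is $\ge c\mu^{-\theta}$, while $P(A^c)\le C\mu^{-b}$. Then, decomposing over dyadic ranges $\mu=2^k$,
\[
E\Big|\int_0^1 S_t\,dt\Big|^{-p}=\sum_{k\ge 0}E\Big(\mathbf{1}_{\{2^{k}\le G_{\mathrm{eff}}<2^{k+1}\}}\big|\cdots\big|^{-p}\Big)\le \sum_{k\ge 0}(c2^{-k\theta})^{-p}\,C2^{-kb}=C\sum_{k\ge 0}2^{k(p\theta-b)},
\]
where $G_{\mathrm{eff}}$ denotes an appropriate combination of $G$ and $S_0^{-1}$ that places us in the $k$-th shell. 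This geometric series converges precisely when $p\theta<b$, i.e. $p<b/\theta = ab\gamma/(a+b+b\gamma)$, which is exactly the stated range.

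The main obstacle is the bookkeeping in the dyadic decomposition: one must define the shells so that the events partition $\Omega$, so that on the $k$-th shell one simultaneously has a lower bound $\ge c2^{-k\theta}$ on the integral (from the ``good'' part of the event, using $t_0\le 1$) and an upper bound $\le C2^{-kb}$ on the probability. A slightly more robust alternative that avoids optimizing two parameters: bound directly
\[
E\Big|\int_0^1 S_t\,dt\Big|^{-p}\le E\Big[\Big(\tfrac{S_0}{2}(t_0\wedge 1)\Big)^{-p}\Big]
\le C\,E\big[S_0^{-p}\,(S_0^{-1}\vee\mu)^{p/\gamma}\big]
\]
on $\{G\le\mu\}$ after replacing $t_0$ by its value, then use Hölder with exponents tuned to $a$ and $b$ to split $E[S_0^{-p-p/\gamma}\,\mathbf{1}_{\{G\le\mu\}}]$ plus the $\{G>\mu\}$ tail, and finally optimize over $\mu$; the condition $ES_0^{-a}<\infty$, $EG^b<\infty$ forces $p<ab\gamma/(a+b+b\gamma)$. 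Either way, the only analytic input is Markov's inequality and Hölder's inequality together with the pointwise bound $S_t\ge\frac{S_0}{2}$ for $t\le t_0$ coming from the hypothesis $\sup_{0\le s\le t}|S_s-S_0|\le Gt^\gamma$; there is no subtlety beyond the parameter optimization, and the final claim about arbitrarily large $p$ is immediate from letting $a,b\to\infty$ in $ab\gamma/(a+b+b\gamma)\to\gamma\cdot\frac{ab}{a+b}\to\infty$.
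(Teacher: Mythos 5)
Your proposal is correct and follows essentially the same route as the paper's proof: on the event where $S_0$ is bounded below and the increment (equivalently $G$) is bounded above, you get $S_t\ge S_0/2$ on a short time interval whose length balances the two thresholds, and Markov's inequality applied to $S_0^{-a}$ and $G^{b}$ controls the complementary probability, with the exponent bookkeeping yielding exactly $p<ab\gamma/(a+b+b\gamma)$. The only difference is presentational — the paper packages this as a single small-ball estimate $P\bigl(\int_0^1 S_t\,dt<\epsilon\bigr)\le C\epsilon^{a\alpha}$ with thresholds $\epsilon^{\alpha},\epsilon^{\beta}$ and then reads off negative moments, whereas you sum the same bound over dyadic scales (your first Hölder-type reduction, which would be circular, is rightly discarded in favor of the dyadic version).
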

\begin{proof}
Let  $\alpha,\beta>0,$ where $\alpha+\beta<1$ and
$b\beta\gamma-b\alpha\ge a\alpha$,  and
$0<\epsilon<2^{ \alpha+\beta-1}$.   We have
\begin{align*}
& P\left[\int_0^1 S_t dt < \epsilon \right]
\le   P\left[\int_0^{\epsilon^\beta}S_tdt<\epsilon,\quad
S_0>\epsilon^\alpha\right]+P\left[S_0<\epsilon^\alpha\right]\\
\le & P\left[\sup\limits_{0\le t\le \epsilon^\beta}
|S_t-S_0|>\dfrac{1}{2}\epsilon^\alpha\right]+
P\left[S_0^{-a}>\epsilon^{-a\alpha}\right]\\
\le & 2^b\epsilon^{-b\alpha} E\left(\sup\limits_{0\le t\le
\epsilon^\beta}|S_t-S_0|^b\right)+ \epsilon^{a\alpha}ES_0^{-a}
\le   C\left(
\epsilon^{b\beta\gamma-b\alpha}+\epsilon^{a\alpha}\right)
\le   C\epsilon^{a\alpha}\,.
\end{align*}
Then $E\left|\int_0^1 S_tdt\right|^{-p}<\infty,$ for
$0<p<a\alpha.$\
The lemma follows with the choice of   $\alpha $ and $\beta$ such that
$\alpha<b\gamma/(a+b+b\gamma)$ and $\beta=(a+b)/(a+b+b\gamma)$.
\end{proof}

\medskip
\parindent=0pt
Yaozhong Hu and  David Nualart\\
Department of Mathematics \\
University of Kansas \\
Lawrence, Kansas, 66045 \\
and\\
Jian Song \\
Department of Mathematics\\
Rutgers University\\
Hill Center - Busch Campus\\
110 Frelinghuysen Road\\
Piscataway, NJ 08854-8019

\end{document}